\newtheorem{theorem}{Theorem}[section]
\newtheorem{proposition}[theorem]{Proposition}
\newtheorem{corollary}[theorem]{Corollary}
\newtheorem{definition}[theorem]{Definition}
\definecolor{plum}{rgb}{1.0, 0.0, 1.0}
\DeclareMathOperator{\osc}{\textup{osc}}
\DeclareMathOperator{\esssup}{\textup{ess\ sup}}
\DeclareMathOperator{\essinf}{\textup{ess\ inf}}
\begin{document}

\title{PDE Comparison Principles for Robin Problems}

\author{Jeffrey J. Langford}

\address{Department of Mathematics, Bucknell University, Lewisburg, Pennsylvania 17837}

\email{jeffrey.langford@bucknell.edu}

\date{\today}

\begin{abstract}
We compare the solutions of two Poisson problems in a spherical shell with Robin boundary conditions, one with given data, and one where the data has been cap symmetrized. When the Robin parameters are nonnegative, we show that the solution to the symmetrized problem has larger convex means. Sending one of the Robin parameters to $+\infty$, we obtain mixed Robin/Dirichlet comparison results in shells. We prove similar results on balls and  prove a comparison principle on generalized cylinders with mixed Robin/Neumann boundary conditions.
\end{abstract}

\keywords{Symmetrization, comparison theorems, Poisson's equation, Robin boundary conditions}
 
\subjclass[2020]{Primary 35J05; Secondary 35B51}

\maketitle

\section{Introduction}

Comparison principles in elliptic pde began with the work of Talenti \cite{Talenti}. In his seminal work, Talenti compared the solutions of two Poisson problems with Dirichlet boundary conditions:
\[
\begin{array}{rclccccrclcc}
-\Delta u & = & f & \text{in} & \Omega, &  &  & -\Delta v & = & f^{\#} & \text{in} & \Omega^{\#},\\
u & = & 0 & \text{on} & \partial \Omega, &  &  &v & = & 0 & \text{on} & \partial \Omega^{\#}.
\end{array}
\]
In the first problem, the given data $f\geq0$ belongs to $L^2(\Omega)$ and $\Omega\subseteq \mathbb{R}^n$ denotes a bounded Lipschitz domain. In the second, symmetrized problem, $\Omega^{\#}\subseteq \mathbb{R}^n$ denotes a centered open ball with $|\Omega|=|\Omega^{\#}|$ (here $|\cdot|$ denotes Lebesgue measure), and $f^{\#}\in L^2(\Omega^{\#})$ denotes the Schwarz rearrangement, a radially decreasing function on $\Omega^{\#}$ whose upper level sets have the same measure as those of $f$, i.e. $|\{x\in \Omega:f(x)>t\}|=|\{x\in \Omega^{\#}:f^{\#}(x)>t\}|$ for each $t\in \mathbb{R}$. Talenti showed that the solutions $u$ and $v$ compare through their Schwarz rearrangements:
\begin{equation}\label{eq:uvcompare}
u^{\#}\leq v \quad \textup{in} \quad \Omega^{\#}.
\end{equation}
This conclusion implies, for instance,
\begin{equation}\label{eq:introconvexmeans}
\int_{\Omega}\phi(u)\,dx \leq \int_{\Omega^{\#}}\phi(v)\,dx
\end{equation}
for each increasing convex function $\phi:\mathbb{R} \to \mathbb{R}$. By the Minimum Principle, $u$ and $v$ are nonnegative, so choosing $\phi(x)=\chi_{[0,+\infty)}(x)\cdot x^p$ gives
\begin{equation}\label{eq:introLpnorms}
\|u\|_{L^p(\Omega)}\leq \|v\|_{L^p(\Omega^{\#})},\qquad 1\leq p\leq +\infty.
\end{equation}
The novelty of Talenti's result is that it allows us to estimate properties of the solution to a given pde in terms of the solution of a related problem that is, in general, much easier to solve. In particular, notice that the solution $v$ to the symmetrized problem is purely radial.

Papers on comparison principles related to Talenti's original work fill many journal pages. We direct the interested reader to the recent survey article, and references therein, by Talenti \cite{TalentiSurvey}. This literature is rich with pde comparison principles for problems with Dirichlet boundary conditions. We recall two results most relevant to our paper. In \cite{AlvinoDiaz Lions and Trombetti}, Alvino, Diaz, Lions, and Trombetti prove a comparison principle for Steiner symmetrization. To Steiner symmetrize a domain, we intersect the given domain with hyperplanes, and replace those intersections with centered balls of equal volume. To Steiner symmetrize a function, we apply the Schwarz rearrangement to each slice function. In this Steiner regime, the authors show that the solutions of the given and symmetrized problems compare via their convex means as in \eqref{eq:introconvexmeans}, with $\#$ denoting Steiner symmetrization. In \cite{Weitsman}, Weitsman establishes a comparison principle for cap symmetrization. Under cap symmetrization, we intersect a given domain with spheres, and replace those intersections with centered spherical caps of equal surface measure. Likewise, we cap symmetrize functions by applying the spherical symmetrization (the analogue of the Schwarz rearrangement for functions defined on spheres) to each radial slice function. In the cap regime, Weitsman shows that the solutions of the given and symmetrized problems compare as in \eqref{eq:uvcompare}, with $\#$ denoting cap symmetrization.

In contrast to the Dirichlet setting, there are few Neumann results that compare solutions to Poisson problems of the form
\[
\begin{array}{rclccccrclcc}
-\Delta u & = & f & \text{in} & \Omega, &  &  & -\Delta v & = & f^{\#} & \text{in} & \Omega^{\#},\\
\frac{\partial u}{\partial \nu} & = & 0 & \text{on} & \partial \Omega, &  &  &\frac{\partial v}{\partial \nu} & = & 0 & \text{on} & \partial \Omega^{\#},
\end{array}
\]
where $\frac{\partial u}{\partial \nu}$ is the outer normal derivative and $\#$ denotes a prescribed symmetrization. The Neumann problem presents a slight technicality in that the solutions $u$ and $v$ are not unique, so one must impose an appropriate normalization, say, that $u$ and $v$ have zero mean. For all known comparison results, the initial domain $\Omega$ and the symmetrized domain $\Omega^{\#}$ are equal. Thus, in moving from the first problem to the symmetrized one, it is only the input function $f$ that is rearranged. For example, in \cite{Langford1}, the author compares $u$ and $v$ when $\Omega$ is a spherical shell (the region between two concentric spheres) and  $\#$ is the cap symmetrization. The author shows that the solutions compare through their convex means as in \eqref{eq:introconvexmeans}. Baernstein, Laugesen, and the author prove a generalization of this result (see Theorem 10.20) in  \cite{Baernstein Symmetrization in Analysis}. See also \cite{Langford2} for a number of related comparison results on spheres and two-dimensional consequences in the plane via projection mappings. In \cite{Langford3}, the author compares $u$ and $v$ when $\Omega=D\times (0,\ell)$ is a generalized cylinder and $\#$ denotes the monotone decreasing rearrangement in the final variable (a one-dimensional Steiner symmetrization in one direction). Again, the author shows that the solutions compare via their convex means as in \eqref{eq:introconvexmeans}. In \cite{Periodic Steiner}, Brock considers similar cylinder-like sets and proves a related comparison principle with an eye towards comparing the extreme values ($\sup$ and $\inf$) of $u$ and $v$.

Likely driven by the recent surge of interest in Robin problems in spectral theory (see any of \cite{AFK}, \cite{Bareket}, \cite{Bossel}, \cite{BFNT}, \cite{BucurGiacomini1}, \cite{BucurGiacomini2}, \cite{ChasmanLangford}, \cite{Daners}, \cite{FK}, \cite{FreitasLaugesen1}, \cite{FrietasLaugesen2}, \cite{GirourdLaugesen}), in \cite{ANT}, Alvino, Nitsch, and Trombetti establish the first Robin comparison principle in the spirit of Talenti. The authors compare the solutions to the Poisson problems
\[
\begin{array}{rclccccrclcc}
-\Delta u & = & f & \text{in} & \Omega, &  &  & -\Delta v & = & f^{\#} & \text{in} & \Omega^{\#},\\
\frac{\partial u}{\partial \nu}+\alpha u & = & 0 & \text{on} & \partial \Omega, &  &  &\frac{\partial v}{\partial \nu} +\alpha v& = & 0 & \text{on} & \partial \Omega^{\#},
\end{array}
\]
where $0\leq f\in L^2(\Omega)$, $\alpha>0$, $\Omega^{\#}$ is a centered ball whose volume equals that of $\Omega$, and $\#$ denotes the Schwarz rearrangement. The authors show that $u$ and $v$ compare through their Lorentz norms; when the dimension $n=2$, it follows that
\[
\|u\|_{L^1(\Omega)}\leq \|v\|_{L^1(\Omega^{\#})} \qquad \textup{and} \qquad \|u\|_{L^2(\Omega)}\leq \|v\|_{L^2(\Omega^{\#})}.
\]
Also in dimension $n=2$, when $f=1$, the authors show that $u$ and $v$ compare as in \eqref{eq:uvcompare}. They therefore deduce a two-dimensional isoperimetric inequality for Robin torsional rigidity. See also the related paper \cite{ACNT}. But beyond these two papers, the author is not aware of any others that discuss pde comparison principles for Robin problems. In this paper, we prove several such results.

To state our results we require some notation. For $0<a<b<+\infty$, let
\[
A=A(a,b)=\{x\in \mathbb{R}^n:a<|x|<b\}
\]
denote a spherical shell with inner radius $a$ and outer radius $b$. For a function $f\in L^1(A)$, let $f^{\#}$ denote the cap symmetrization (for a precise definition, see Definitions \ref{def:decrearr}, \ref{def:spherrearr}, and \ref{def:capsymm}). Denote
\[
A^{\bigstar}=(a,b)\times (0,\pi)
\]
for the polar rectangle associated to $A$ and define functions $Jf,f^{\bigstar}:A^{\bigstar}\to \mathbb{R}$ a.e. by
\begin{align*}
Jf(r,\theta)&=\int_{K(\theta)}f(r\xi)\,d \sigma(\xi),\\
f^{\bigstar}(r,\theta)&=\int_{K(\theta)}f^{\#}(r\xi)\,d\sigma(\xi),
\end{align*}
where $\sigma$ denotes standard surface measure on $\mathbb{S}^{n-1}$ and
\[
K(\theta)=\{\xi \in \mathbb{S}^{n-1}:d(e_1,\xi)<\theta \}
\]
denotes the open polar cap on $\mathbb{S}^{n-1}$ centered at $e_1$ with radius $\theta$, computed in the spherical distance. Our first main result is a Robin comparison principle for the cap symmetrization in shells. Notice that our result allows for the possibility of mixed Robin/Neumann boundary conditions.

\begin{theorem}[Robin Comparison Principle on Shells]\label{Th:shell}
Say $0<a<b<+\infty$ and $A=A(a,b)\subseteq \mathbb{R}^n$ is a spherical shell with inner radius $a$ and outer radius $b$. Let $\alpha,\beta \geq 0$ and $f\in L^2(A)$; when $\alpha=\beta =0$, we also assume $\int_{A}f\,dx=0$. Say $u$ and $v$ solve the Poisson problems
\[
\begin{array}{rclccccrclcc}
-\Delta u & = & f & \text{in} & A, &  &  & -\Delta v & = & f^{\#} & \text{in} & A,\\
\frac{\partial u}{\partial \nu} +\alpha u & = & 0 & \text{on} & \{|x|=a\}, &  &  & \frac{\partial v}{\partial \nu} + \alpha v & = & 0 & \text{on} & \{|x|=a\},\\
\frac{\partial u}{\partial \nu} +\beta  u & = & 0 & \text{on} & \{|x|=b\}, &  &  & \frac{\partial v}{\partial \nu} + \beta  v & = & 0 & \text{on} & \{|x|=b\},
\end{array}
\]
where $f^{\#}$ denotes the cap symmetrization of $f$;  when $\alpha=\beta =0$, we also assume that $u$ and $v$ are normalized so that $\int_{A}u\,dx=\int_{A}v\,dx=0$. Then for almost every $r\in(a,b)$, the inequality
\[
u^{\bigstar}(r,\theta)\leq Jv(r,\theta)
\]
holds for each $\theta \in (0,\pi)$. In particular, for almost every $r\in(a,b)$ and for each convex function $\phi:\mathbb{R}\to \mathbb{R}$, we have
\[
\int_{\{|x|=r\}}\phi(u)\,dS \leq \int_{\{|x|=r\}}\phi(v)\,dS.
\]
\end{theorem}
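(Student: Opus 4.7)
The plan is to reduce both Poisson problems to partial differential (in)equalities on the polar rectangle $A^\bigstar = (a,b) \times (0,\pi)$. Since $f^\#$ is invariant under rotations about the axis through $e_1$, uniqueness of the Robin problem (or uniqueness up to additive constants when $\alpha_1 = \alpha_2 = 0$, pinned down by the zero-mean normalization) forces $v$ itself to be cap-symmetric, so $v^\# = v$ and $v^\bigstar = Jv$. The task therefore reduces to proving the pointwise inequality $u^\bigstar \leq Jv$ on $A^\bigstar$. Writing the Laplacian in spherical coordinates, integrating $-\Delta v = f^\#$ over the cap $K(\theta)$, and using the divergence theorem on $\mathbb{S}^{n-1}$ together with the cap symmetry of $v$, I would derive the classical PDE $\mathcal{L}V = J(f^\#)$ for $V = Jv$, where
\[
\mathcal{L}\Phi := -\Phi_{rr} - \frac{n-1}{r}\Phi_r - \frac{1}{r^2}\bigl[\Phi_{\theta\theta} - (n-2)\cot\theta\,\Phi_\theta\bigr].
\]
The Robin conditions translate to $-V_r + \alpha_1 V = 0$ at $r = a$ and $V_r + \alpha_2 V = 0$ at $r = b$, with $V(r,0) = 0$ (empty cap) and $V(r,\pi)$ equal to the full integral of $v$ over the sphere $\{|x|=r\}$.

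The main obstacle is the corresponding Baernstein-type sub-inequality $\mathcal{L}U \leq J(f^\#)$ for $U = u^\bigstar$, weakly on $A^\bigstar$. Using the variational characterization
\[
u^\bigstar(r,\theta) = \sup\left\{\int_E u(r\xi)\,d\sigma(\xi) : E \subset \mathbb{S}^{n-1},\ \sigma(E) = \sigma(K(\theta))\right\},
\]
together with the spherical coarea formula and the isoperimetric inequality on $\mathbb{S}^{n-1}$ applied to the superlevel sets of $u(r,\cdot)$, one obtains the sub-PDE in the weak sense; this adapts the Baernstein star-function technology from the Dirichlet setting (as in Weitsman's work) to the Robin setting and is the principal technical step. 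The Robin behavior of $U$ at $r = a, b$ follows by differentiating under the supremum at a maximizing cap $E^*$ (envelope theorem) and invoking $u_r = \alpha_1 u$ (respectively $u_r = -\alpha_2 u$) on the boundary spheres. By equimeasurability on each sphere, $U(r,\pi) = \int_{\{|x|=r\}} u\,dS$ satisfies the same radial ODE and Robin endpoint conditions as $V(r,\pi)$, hence the two agree.

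With these pieces in hand, $W := V - U$ satisfies $\mathcal{L}W \geq 0$ weakly on $A^\bigstar$, matching Robin inequalities on $\{r = a\}$ and $\{r = b\}$, and $W = 0$ on $\{\theta = 0\} \cup \{\theta = \pi\}$. A weak maximum principle for this mixed Robin/Dirichlet problem, handled carefully because of the weight $(\sin\theta)^{n-2}$ degenerating at the angular ends, yields $W \geq 0$, i.e., the pointwise inequality $u^\bigstar \leq Jv$. The convex-mean conclusion then follows from Hardy-Littlewood-P\'olya majorization: the cap inequalities $\int_{K(\theta)} u^\#\,d\sigma \leq \int_{K(\theta)} v\,d\sigma$ with equality at $\theta = \pi$ express that $u^\#(r\cdot)$ is majorized by $v(r\cdot)$ on the sphere $\{|x|=r\}$, whence $\int_{\{|x|=r\}} \phi(u)\,dS = \int_{\{|x|=r\}} \phi(u^\#)\,dS \leq \int_{\{|x|=r\}} \phi(v)\,dS$ for every convex increasing $\phi$; when $\alpha_1 = \alpha_2 = 0$, the spherical integrals of $u^\#$ and $v$ additionally agree (equality also at $\theta = 0$), and the inequality extends to all convex $\phi$.
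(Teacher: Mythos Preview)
Your architecture matches the paper's: the subharmonicity inequality $-\Delta^\bigstar u^\bigstar \leq f^\bigstar$, the commutativity $\Delta^\bigstar Jv = J\Delta v$, and a maximum principle on $A^\bigstar$ applied to $u^\bigstar - Jv$. Two points, however, are real gaps rather than omitted details.

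First, the a priori claim $v^\# = v$ is not justified: uniqueness gives only that $v$ is invariant under rotations fixing the $e_1$-axis, not that $v(r,\cdot)$ is \emph{decreasing} in the polar angle. The paper in fact deduces $v = v^\#$ only as a corollary of the theorem. Fortunately this claim is unnecessary, since the commutativity relation holds for any $v\in C^2(A)$ without symmetry, so your identity $\mathcal L(Jv)=J(f^\#)$ survives.

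Second, and this is the substantive issue, the ``envelope theorem'' for the Robin behavior of $U=u^\bigstar$ at $r=a,b$ does not go through as stated. The maximizing set $E^*(r,\theta)$ varies with $r$ and $u^\bigstar$ need not be differentiable in $r$, so you cannot simply differentiate under the supremum. What actually works is a one-sided difference-quotient bound: since $E^*(a,\theta)$ is a competitor at radius $a+h$, one has $u^\bigstar(a+h,\theta)\geq \int_{E^*(a,\theta)}u((a+h)\xi)\,d\sigma$, and then the Robin condition together with an exponential weight $e^{-\alpha_1 r}$ and an auxiliary barrier $\varepsilon Q(r,\theta)=\varepsilon[(r-a)(r-b)+C\theta(\pi-\theta)]$ forces $w(a+h,\theta)>w(a,\theta)$ for $w=u^\bigstar-Jv-\varepsilon Q$, ruling out a boundary maximum there. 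The same barrier simultaneously resolves the degeneracy of $\Delta^\bigstar$ at $\theta=0,\pi$ that you flag but do not treat. You also omit the approximation step: the subharmonicity theorem requires $u\in C^2(A)$, so one must first take $f\in C_c^\infty(A)$ and then pass to general $f\in L^2(A)$.

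On the positive side, your argument that $\psi(r)=\int_{\mathbb S^{n-1}}(u-v)(r\xi)\,d\sigma \equiv 0$ by uniqueness for the radial ODE $(r^{n-1}\psi')'=0$ with the inherited Robin endpoint conditions is correct and is in fact cleaner than the paper's case split on whether $r^{n-1}\psi'$ vanishes.
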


By sending $\alpha$ (or $\beta$) to $+\infty$ in Theorem \ref{Th:shell}, the Robin boundary condition along $\{|x|=a\}$ (or $\{|x|=b\}$) turns into a Dirichlet boundary condition. After proving Theorem \ref{Th:shell}, we use this observation to deduce analogous comparison principles on shells with mixed Robin/Dirichlet boundary conditions. See Theorem \ref{Th:shellmixed}.

We also prove comparison principles on balls that requires a bit more notation. For $0<b<+\infty$, let 
\[
B=B(0,b)=\{x\in \mathbb{R}^n:|x|<b\}.
\]
Denote
\[
B^{\bigstar}=(0,b)\times (0,\pi)
\]
for the polar rectangle associated to $B$ and define functions $Jf,f^{\bigstar}:B^{\bigstar}\to \mathbb{R}$ a.e. by
\begin{align*}
Jf(r,\theta)&=\int_{K(\theta)}f(r\xi)\,d \sigma(\xi),\\
f^{\bigstar}(r,\theta)&=\int_{K(\theta)}f^{\#}(r\xi)\,d\sigma(\xi),
\end{align*}
with $f^{\#}$ the cap symmetrization of $f$ (see also the discussion preceding the proof of Theorem \ref{Th:ball} in Section 3). Our second main result is:

\begin{theorem}[Robin Comparison Principle on Balls]\label{Th:ball}
Say $0<b<+\infty$ and let $B\subseteq \mathbb{R}^n$ denote an open ball centered at $0$ with radius $b$. Let $\beta \geq 0$ and $f\in L^2(A)$; when $\beta=0$, we also assume $\int_{B}f\,dx=0$. Say $u$ and $v$ solve the Poisson problems
\[
\begin{array}{rclccccrclcc}
-\Delta u & = & f & \text{in} & B, &  &  & -\Delta v & = & f^{\#} & \text{in} & B,\\
\frac{\partial u}{\partial \nu} +\beta u & = & 0 & \text{on} & \{|x|=b\}, &  &  & \frac{\partial v}{\partial \nu} + \beta v & = & 0 & \text{on} & \{|x|=b\},
\end{array}
\]
where $f^{\#}$ denotes the cap symmetrization of $f$;  when $\beta=0$, we also assume that $u$ and $v$ are normalized so that $\int_{B}u\,dx=\int_{B}v\,dx=0$. Then for almost every $r\in(a,b)$, the inequality
\[
u^{\bigstar}(r,\theta)\leq Jv(r,\theta)
\]
holds for each $\theta \in (0,\pi)$. In particular, for almost every $r\in(a,b)$ and for each convex function $\phi:\mathbb{R}\to \mathbb{R}$, we have
\[
\int_{\{|x|=r\}}\phi(u)\,dS \leq \int_{\{|x|=r\}}\phi(v)\,dS.
\]
\end{theorem}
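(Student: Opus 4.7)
The idea is to deduce Theorem \ref{Th:ball} from Theorem \ref{Th:shell} by realizing the ball $B$ as the limit of shells $A(\varepsilon,b)$ as $\varepsilon \to 0^+$, and then upgrade the convex-means conclusion to all convex functions using an equality of spherical averages. For each $\varepsilon\in(0,b/2)$, let $u_\varepsilon,v_\varepsilon$ solve the Poisson problems on $A(\varepsilon,b)$ with inner Neumann boundary condition ($\alpha_1=0$) and outer Robin boundary condition with parameter $\alpha_2=\alpha$. When $\alpha>0$ the data are simply $f|_{A(\varepsilon,b)}$ and $f^\#|_{A(\varepsilon,b)}$. When $\alpha=0$, the Neumann–Neumann compatibility forces me to use $f-c_\varepsilon$ and $f^\#-c_\varepsilon$ with $c_\varepsilon=|A(\varepsilon,b)|^{-1}\int_{A(\varepsilon,b)}f$, and to normalize $u_\varepsilon,v_\varepsilon$ to have zero mean; note $c_\varepsilon\to 0$ since $\int_B f=0$. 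Because cap symmetrization acts sphere by sphere, $(f|_{A(\varepsilon,b)})^\#=f^\#|_{A(\varepsilon,b)}$, so Theorem \ref{Th:shell} applies and yields
\[
u_\varepsilon^{\bigstar}(r,\theta)\leq Jv_\varepsilon(r,\theta)
\]
for a.e.\ $r\in(\varepsilon,b)$ and every $\theta\in(0,\pi)$.

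Next I pass to the limit. A standard energy identity together with a Robin–Poincar\'e inequality (respectively the mean-zero Poincar\'e inequality when $\alpha=0$) furnishes an $H^1$-bound on $u_\varepsilon$ that is uniform on any compact subset of $B\setminus\{0\}$. Extracting a weak limit $\tilde u$ in $H^1_{\mathrm{loc}}(B\setminus\{0\})$ and sending $\varepsilon\to 0$ in the weak formulation, $\tilde u$ satisfies the Robin Poisson problem on $B\setminus\{0\}$ with the prescribed outer boundary condition. Since an isolated point is removable for $H^1$ functions in dimension $n\geq 2$, $\tilde u$ extends to the weak solution on $B$, and uniqueness gives $\tilde u=u$; the same argument yields $v_\varepsilon\to v$. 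Using $L^2$-convergence on compact subsets of $B\setminus\{0\}$ (and hence, after passing to a subsequence, a.e.\ convergence of $u_\varepsilon(r\cdot)$ on spheres), the cap-symmetrized integrals $u_\varepsilon^{\bigstar}$ and $Jv_\varepsilon$ converge to $u^{\bigstar}$ and $Jv$ respectively, giving the asserted pointwise comparison $u^{\bigstar}(r,\theta)\leq Jv(r,\theta)$ for a.e.\ $r\in(0,b)$ and every $\theta$.

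For the strengthened convex-means inequality, I argue that the spherical means agree. Let $\mathring u(r)=\sigma(\mathbb{S}^{n-1})^{-1}\int_{\mathbb{S}^{n-1}}u(r\xi)\,d\sigma(\xi)$, and likewise $\mathring v,\mathring f$. Because cap symmetrization preserves the integral over each sphere, $\mathring{f^\#}=\mathring f$. Averaging $-\Delta u=f$ over spheres annihilates the angular part of the Laplacian, so $\mathring u$ and $\mathring v$ both solve the ODE $-\mathring w''-\tfrac{n-1}{r}\mathring w'=\mathring f$ on $(0,b)$ with the Robin condition $\mathring w'(b)+\alpha\mathring w(b)=0$, regularity at the origin, and, when $\alpha=0$, the zero-mean normalization; these conditions determine $\mathring w$ uniquely, so $\mathring u=\mathring v$. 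Setting $\theta=\pi$ in $u^{\bigstar}\leq Jv$ now produces equality (both sides equal the spherical integral), and this forces $u^{\bigstar}(r,\cdot)\leq v^{\bigstar}(r,\cdot)$ with matching total mass on each sphere; a Hardy–Littlewood–P\'olya majorization then promotes the conclusion from increasing convex $\phi$ to all convex $\phi$.

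\emph{Main obstacle.} The delicate step is the convergence $u_\varepsilon\to u$: the shell solutions live on varying domains and satisfy boundary conditions on an inner sphere that is collapsing to a point. The key points are the uniform $H^1_{\mathrm{loc}}$-bound away from the origin and the removable-singularity argument that identifies the limit with the genuine ball solution; in the $\alpha=0$ case, one must additionally verify that the constants $c_\varepsilon$ and the zero-mean normalizations survive the limit cleanly.
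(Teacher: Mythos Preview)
Your approach is correct in outline but takes a genuinely different route from the paper. You solve auxiliary Robin--Neumann problems on the shrinking shells $A(\varepsilon,b)$ and pass to the limit in the \emph{solutions} $u_\varepsilon,v_\varepsilon$; the paper instead works with the actual ball solutions $u,v$ throughout. Assuming first $f\in C_c^\infty(B)$ (so $u,v\in C^2(\overline B)$), the paper restricts the star-function machinery to $A_\delta^{\bigstar}$, shows $-\Delta^{\bigstar}w\le 0$ weakly there for $w=u^{\bigstar}-Jv-\varepsilon Q$, and analyzes $\partial A_\delta^{\bigstar}$. The side $r=b$ is handled exactly as in the shell proof; for the remaining three sides one shows directly that $r^{n-1}\psi'(r)$ is constant (with $\psi(r)=\int_{\mathbb S^{n-1}}(u-v)(r\xi)\,d\sigma$), sends $r\to 0^+$ using $C^2$ regularity to force the constant to vanish, and then uses the Robin condition at $r=b$ (or the zero-mean normalization when $\alpha=0$) to get $\psi\equiv 0$. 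This makes $u^{\bigstar}-Jv$ vanish on $\theta=0$, $\theta=\pi$, and at $r=0$, so after letting $\delta\to 0^+$ the maximum principle closes immediately; the general $f\in L^2$ case follows by the same approximation as in the shell theorem. The paper's route thus avoids solving any auxiliary PDE and sidesteps precisely the convergence issues you flag as your main obstacle. Your route buys a clean black-box application of Theorem~\ref{Th:shell}, but at the cost of the uniform $H^1$ bound on domains varying with $\varepsilon$, the removable-singularity identification of the weak limit with $u$, and the trace convergence on $\{|x|=b\}$, all of which are standard but require genuine work that your sketch does not supply. Note also that your spherical-mean ODE argument, as stated for general $f\in L^2$, tacitly assumes enough regularity for $\mathring u$ to satisfy the ODE classically with $r^{n-1}\mathring u'(r)\to 0$ at the origin; as in the paper, it is cleanest to prove this first for smooth $f$ and then approximate.
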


To state our final main result, we also introduce some additional notation. Let $D\subseteq \mathbb{R}^{n-1}$ denote a bounded $C^{\infty}$ domain and say $\ell>0$. Define a generalized cylinder by
\[
\Omega=D\times (0,\ell).
\]
Points of $\Omega$ are denoted by pairs $(x,y)$ with $x\in D$ and $y\in (0,\ell)$. Given a function $f\in L^1(\Omega)$, we can hold almost every $x\in D$ fixed, and take a decreasing rearrangement in the variable $y$. Doing so yields the monotone decreasing rearrangement of $f$ in the $y$-direction (for the precise definition, see Definitions \ref{def:decrearr} and \ref{def:decy}). Define functions $Jf,f^{\bigstar}:\Omega \to \mathbb{R}$ a.e. by
\begin{align*}
Jf(x,y)&=\int_0^yf(x,t)\,dt,\\
f^{\bigstar}(x,y)&=\int_0^yf^{\#}(x,t)\,dt,
\end{align*}
where $f^{\#}$ is the monotone decreasing rearrangement of $f$ in the $y$-direction. We are now prepared to state our last result, a comparison principle on cylinders with mixed boundary conditions.

\begin{theorem}[Mixed Robin/Neumann Comparison Principle on Cylinders]\label{th:cyl}
Let $\Omega=D\times (0,\ell)$ denote a cylinder whose base $D \subset \mathbb{R}^{n-1}$ is a bounded $C^{\infty}$ domain and say $f\in L^2(\Omega)$.  We divide the boundary $\partial \Omega$ into two pieces, $\partial \Omega_1=(D\times \{0\})\cup (D\times \{\ell\})$ and $\partial \Omega_2=\partial D \times [0,\ell]$. For $\alpha \geq 0$, let $u$ and $v$ be solutions to the Poisson problems
\[
\begin{array}{rclccccrclcc}
-\Delta u & = & f & \text{in} & \Omega, &  &  & -\Delta v & = & f^{\#} & \text{in} & \Omega,\\
\frac{\partial u}{\partial \nu} & = & 0 & \text{on} & \partial \Omega_1 &  &  & \frac{\partial v}{\partial \nu} & = & 0 & \text{on} & \partial \Omega_1,\\
\frac{\partial u}{\partial \nu} +\alpha u & = & 0 & \text{on} & \partial \Omega_2, &  &  & \frac{\partial v}{\partial \nu} + \alpha v & = & 0 & \text{on} & \partial \Omega_2,
\end{array}
\]
where $f^{\#}$ denotes the decreasing rearrangement of $f$ in the $y$-direction; when $\alpha=0$, we assume $f$, $u$, and $v$ have zero mean on $\Omega$. Assume that $f$ and $f^{\#}$ are continuous on $\overline{\Omega}$ and are sufficiently regular to ensure that the solutions $u,v$ belong to $C^2(\overline{\Omega})$. Then for each $x\in D$
\[
u^{\bigstar}(x,y)\leq Jv(x,y)
\]
for every $0\leq y\leq \ell$. In particular,
\[
\int_0^{\ell}\phi(u(x,y))\,d y\leq \int_0^{\ell}\phi(v(x,y))\,d y
\]
for each convex function $\phi:\mathbb{R}\rightarrow \mathbb{R}$.
\end{theorem}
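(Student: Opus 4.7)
The plan is to reduce the claim to a maximum principle applied to the auxiliary function
\[
W(x,y) := Jv(x,y) - u^{\bigstar}(x,y),
\]
which I aim to show is nonnegative on $\overline{\Omega}$. Four ingredients are needed: monotonicity of $v$ in $y$, the PDE/boundary relations for $Jv$ and $u^{\bigstar}$, the Dirichlet-type data $W(x,\ell) = 0$, and the maximum principle itself.

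First I would show $v$ is non-increasing in $y$. Setting $p := \partial_y v$ and differentiating the system in $y$, $p$ is subharmonic ($-\Delta p = \partial_y f^{\#} \le 0$), vanishes on $\partial \Omega_1$, and satisfies $\partial_\nu p + \alpha p = 0$ on $\partial \Omega_2$ (the outer normal on $\partial \Omega_2$ lies in the $x$-directions, so $\partial_\nu$ commutes with $\partial_y$). The weak maximum principle together with Hopf's lemma on $\partial \Omega_2$, valid since $\alpha \ge 0$, forbids a positive maximum of $p$, giving $p \le 0$ and hence $v = v^{\#}$, $Jv = v^{\bigstar}$. Integrating $-\Delta v = f^{\#}$ in $y$ and using $\partial_y v(x,0) = 0$ yields $-\Delta Jv = f^{\bigstar}$ in $\Omega$; integrating the Robin condition gives $\partial_\nu Jv + \alpha Jv = 0$ on $\partial \Omega_2$, and $Jv(x,0) = 0$ is trivial. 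For $u^{\bigstar}$, the representation $u^{\bigstar}(x,y) = \int_{E(x,y)} u(x,t)\,dt$ with $E(x,y) = \{t : u(x,t) > u^{\#}(x,y)\}$, together with the envelope identity $\nabla_x u^{\bigstar}(x,y) = \int_{E(x,y)} \nabla_x u(x,t)\,dt$ (valid because $u - u^{\#}(x,y)$ vanishes on $\partial E(x,y)$), readily gives $\partial_\nu u^{\bigstar} + \alpha u^{\bigstar} = 0$ on $\partial \Omega_2$.

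The main obstacle is the differential inequality $-\Delta u^{\bigstar} \le f^{\bigstar}$ in $\Omega$. Differentiating the envelope identity once more in $x$ produces
\[
\Delta_x u^{\bigstar}(x,y) = \int_{E(x,y)} \Delta_x u(x,t)\,dt + B(x,y),
\]
with a boundary term $B(x,y)$ coming from the motion of $\partial E(x,y)$ under $x$-perturbations, while $\partial_y u^{\bigstar} = u^{\#}(x,y)$ and $\partial_y^2 u^{\bigstar} = \partial_y u^{\#} \le 0$. Substituting $\Delta_x u = -f - \partial_y^2 u$, estimating $\int_{E(x,y)} f(x,t)\,dt \le f^{\bigstar}(x,y)$ by the Hardy--Littlewood inequality, and telescoping $\int_{E(x,y)} \partial_t^2 u\,dt = \sum_j [\partial_y u(x,b_j) - \partial_y u(x,a_j)]$ over the components $(a_j,b_j)$ of $E$ (with favourable signs $\partial_y u(x,a_j) \ge 0$ and $\partial_y u(x,b_j) \le 0$), together with a careful analysis of $B(x,y)$, yields the inequality. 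By the assumed $C^2$ regularity of $u$ and Sard's theorem, the envelope identities hold for almost every $y \in (0,\ell)$, with the inequality extending by approximation. I would model this step on the pure Neumann analysis in \cite{Langford3}; the Robin setting uses the same star-function machinery, the only new input being the boundary identity above.

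The data $W(x,\ell) = 0$ follows by noting that $h(x) := \int_0^\ell u(x,y)\,dy$ and $\tilde h(x) := \int_0^\ell v(x,y)\,dy$ both solve $-\Delta_x g = \int_0^\ell f(x,y)\,dy$ on $D$ with $\partial_\nu g + \alpha g = 0$ on $\partial D$, since the $y$-rearrangement preserves fibrewise integrals of $f$ and $u, v$ share the same Robin behaviour; uniqueness (with the zero-mean normalization when $\alpha = 0$) gives $h = \tilde h$. Thus $W = 0$ on $\partial \Omega_1$, $-\Delta W \ge 0$ in $\Omega$, and $\partial_\nu W + \alpha W = 0$ on $\partial \Omega_2$, so the weak minimum principle and Hopf's lemma force $W \ge 0$ on $\overline{\Omega}$, i.e.\ $u^{\bigstar}(x,y) \le Jv(x,y)$. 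Finally, since $v = v^{\#}$, the inequality reads $J(u^{\#})(x,\cdot) \le J(v^{\#})(x,\cdot)$ on $[0,\ell]$ with equality at $y = \ell$, and the Hardy--Littlewood--P\'olya majorization theorem yields $\int_0^\ell \phi(u(x,y))\,dy = \int_0^\ell \phi(u^{\#}(x,y))\,dy \le \int_0^\ell \phi(v^{\#}(x,y))\,dy = \int_0^\ell \phi(v(x,y))\,dy$ for every convex $\phi$.
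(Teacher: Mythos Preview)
Your overall architecture matches the paper's: define $W=Jv-u^{\bigstar}$, use the subharmonicity inequality $-\Delta u^{\bigstar}\le f^{\bigstar}$ (which the paper quotes from \cite{Langford3} rather than re-deriving), show $W=0$ on $\partial\Omega_1$ by proving $\psi(x)=\int_0^{\ell}(u-v)\,dt$ solves the homogeneous Robin problem on $D$, and apply a maximum principle. Your argument for $W(x,\ell)=0$ is essentially identical to the paper's Step~3.

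The genuine gap is in your treatment of $\partial\Omega_2$. You invoke Hopf's lemma on $W$, but $u^{\bigstar}$ is not known to be $C^1$ up to $\partial\Omega_2$, let alone $C^2$; the subharmonicity inequality holds only in the weak (distributional) sense, and your envelope identity $\nabla_x u^{\bigstar}=\int_{E(x,y)}\nabla_x u\,dt$ is valid only at regular levels. Even granting the one-sided estimate $\liminf_{h\to 0^+}h^{-1}\big(u^{\bigstar}(x+hn,y)-u^{\bigstar}(x,y)\big)\ge \alpha\, u^{\bigstar}(x,y)$, combining it with the Robin identity for $Jv$ gives only $\limsup_{h\to 0^+}h^{-1}\big(W(x+hn,y)-W(x,y)\big)\le \alpha W(x,y)\le 0$ at a putative negative minimum, while the minimum condition gives $\ge 0$. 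Without the \emph{strict} inequality from Hopf, this is not a contradiction. The paper closes this gap not with Hopf but with an $\varepsilon$-perturbation: it sets $w=u^{\bigstar}-Jv-\varepsilon Q$ where $Q(x,y)=Q_1(x)+Cy(\ell-y)$ and $Q_1$ solves an auxiliary Robin (or Neumann) problem on $D$ with $\partial_\nu Q_1+\alpha Q_1=1$ on $\partial D$. Then a one-sided difference quotient computation, using only the trivial inequality $u^{\bigstar}(x+hn,y)\ge \int_{E(x)}u(x+hn,t)\,dt$ and the exponential weight $e^{-\alpha r}$, yields the \emph{strict} inequality $w(x+hn,y)>w(x,y)$ for small $h>0$, ruling out a maximum on $\partial\Omega_2$ without any differentiability of $u^{\bigstar}$. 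Littman's weak maximum principle then pushes the max to $\partial\Omega_1$, and one sends $\varepsilon\to 0$.

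A secondary point: your preliminary step proving $v=v^{\#}$ via the maximum principle for $p=\partial_y v$ is unnecessary (the paper obtains $u^{\bigstar}\le Jv$ directly, and deduces $v=v^{\#}$ only afterward as a corollary), and it carries its own regularity issue since you differentiate $-\Delta v=f^{\#}$ in $y$ while only continuity of $f^{\#}$ is assumed. For the convex-means conclusion you do not need $Jv=v^{\bigstar}$; the inequality $u^{\bigstar}\le Jv\le v^{\bigstar}$ together with the equal-means fact $\psi\equiv 0$ already feeds Proposition~\ref{Prop:Majorization}.
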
 

Observe that this result has stronger assumptions on the functions $f,f^{\#},u$, and $v$ than our previous main results. This is because the domain $\Omega$ is Lipschitz rather than $C^{\infty}$ smooth. In this Lipschitz setting, the author is not aware of any regularity results that guarantee smoothness of solutions when the forcing function $f$ has sufficient regularity. Our goal here is not to establish such a regularity result, but rather to illustrate that the techniques used to prove Theorems \ref{Th:shell} and \ref{Th:ball} adapt to other settings.

We note that in \cite{Periodic Steiner}, Brock proves a result related to Theorem \ref{th:cyl} using Green's functions. Again, his conclusions are weaker than those obtained here, as it appears his main goal is to compare the extreme values of $u$ and $v$ rather than their convex means or $L^p$-norms. Indeed, all of our comparison principles have corollaries comparing the $L^p$-norms of $u$ and $v$ (see Corollaries \ref{cor:Lpnorms}, \ref{cor:Lpball}, and \ref{cor:Lpcyl}).

The techniques used in this paper are similar to those used by Baernstein in \cite{Baernstein Cortona Volume}, the author in \cite{Langford1} and \cite{Langford3} to prove Neumann versions of Theorems \ref{Th:shell}, \ref{Th:ball}, and \ref{th:cyl}, and Baernstein, Laugesen, and the author in Chapter 10 of \cite{Baernstein Symmetrization in Analysis}. Indeed, the techniques employed here are drastically different from those used by Talenti to prove his original comparison result (and also those used in \cite{ANT}). Whereas Talenti's work relies on the classic isoperimetric inequality, the coarea formula, and a detailed analysis of a function's (in particular $u$'s) level sets, our work relies on $\bigstar$-functions, so-called ``subharmonicity'' results (certain weak differential inequalities), and a weak Maximum Principle. In a certain sense, the machinery used here is robust, as it can be used to prove Dirichlet, Neumann, and now Robin comparison principles.

The remainder of the paper is outlined as follows. Section 2 is divided into three subsections. In the first, we introduce star functions on general measure spaces; in the second subsection, we lay out all of the background necessary to prove Theorems \ref{Th:shell} and \ref{Th:ball}, including relevant rearrangements and subharmonicity results; in the third subsection, we present background for Theorem \ref{th:cyl}, introducing relevant rearrangements and subharmonicity results. In Section 3, we prove our main results and related consequences.

\section{Background}
\subsection{Robin Problems.}\label{Sub:robinfund} We start by collecting basic facts about solutions to Poisson problems with Robin boundary conditions. Say $\Omega \subseteq \mathbb{R}^n$ is a bounded Lipschitz domain, $\alpha \in L^{\infty}(\partial \Omega)$ is nonnegative and not identically zero, and $f\in L^2(\Omega)$. We consider the Robin Poisson problem
\begin{eqnarray}
-\Delta u & = & f\quad\text{in}\quad\Omega,\label{eq:RPpde}\\
\frac{\partial u}{\partial \nu} +\alpha u& = & 0\quad\text{on}\quad\partial\Omega. \label{eq:RPbc}
\end{eqnarray}
A function $u\in H^1(\Omega)$ is said to be a weak solution provided
\[
\int_{\Omega}\nabla u \cdot \nabla v\,dx+\int_{\partial \Omega}\alpha uv\,dS=\int_{\Omega}fv\,dx,\qquad v\in H^1(\Omega),
\]
where the boundary integral is interpreted in the trace sense. 

The bilinear form associated to the Robin problem is
\[
B[v,w]=\int_{\Omega}\nabla v \cdot \nabla w\,dx+\int_{\partial \Omega}\alpha vw\,dS,\qquad v,w\in H^1(\Omega).
\]
The boundedness of $\alpha$ on $\partial \Omega$ and the Trace Theorem give that $B$ is bounded,  i.e. there exists a positive constant $c_1$ where
\begin{equation*}\label{eq:Bbd}
|B[v,w]|\leq c_1\|v\|_{H^1(\Omega)}\|w\|_{H^1(\Omega)},\qquad v,w\in H^1(\Omega).
\end{equation*}
The theorems of Banach-Alaoglu and Rellich-Kondrachov, and the assumption that $\alpha$ is positive on a set of positive measure on $\partial \Omega$ give that $B$ is coercive. So, there exists a positive constant $c_2$ with
\begin{equation*}
B[v,v]\geq c_2\|v\|_{H^1(\Omega)}^2,\qquad v\in H^1(\Omega).
\end{equation*}
The Lax-Milgram Theorem now implies that for each $f\in L^2(\Omega)$, there exists a unique $u\in H^1(\Omega)$ where
\[
B[u,v]=\int_{\Omega}fv\,dx, \qquad v\in H^1(\Omega).
\]
In other words, the Robin problem \eqref{eq:RPpde}, \eqref{eq:RPbc} with data $f$ has a unique solution $u$. Moreover, by coercivity, the solution depends continuously on the data:
\begin{equation}\label{ineq:contdata}
\|u\|_{H^1(\Omega)}\leq \frac{1}{c_2}\|f\|_{L^2(\Omega)}.
\end{equation}

\subsection{Star Functions on General Measure Spaces.}
The goal of our paper is to compare the solutions of two Robin problems as in $\eqref{eq:RPpde},\eqref{eq:RPbc}$, one with initial data, and one where the data has been rearranged, or symmetrized. As we shall see, the two solutions compare via their star functions. So in this subsection, we collect basic facts about star functions on general measure spaces to be used throughout the paper. Much of the background presented in this subsection also appears in Section 2.1 of \cite{Langford1}. For more on applications of the star function, we direct the reader to the work of Baernstein \cite{Baernstein Edrei's Spread Conjecture}, \cite{Baernstein Integral means}, \cite{Baernstein how the star function}, \cite{Barenstein Star Function in Complex Analysis}.

Unless specified otherwise, $(X,\mu)$ denotes a finite measure space. We start by precisely defining what it means for two functions to have the same size. 
\begin{definition}
[Rearrangements] Suppose that $f\in L^{1}(X)$ and $g\in L^{1}(Y)$
are defined on finite measure spaces $(X,\mu)$ and $(Y,\nu)$. We say $f$ and
$g$ are rearrangements of each other provided
\[
\mu\left(\{x\in X:t<f(x)\}\right)  =  \nu\left(\{y\in Y:t<g(y)\}\right),\qquad t\in \mathbb{R}.
\]
\end{definition}
For a comprehensive treatment of rearrangements, we direct the interested reader to the recently published manuscript of Baernstein \cite{Baernstein Symmetrization in Analysis}. See also \cite{Kawohl1} and \cite{Leib and Loss Analysis}.

Given an integrable function, we can always build a decreasing function on an interval that rearranges the given function's values. This function is called the decreasing rearrangement and it plays a central role in defining other rearrangements (symmetrizations) throughout our paper.
\begin{definition}
[Decreasing Rearrangement]\label{def:decrearr}Let $f\in L^{1}(X)$. Define $f^{\ast}:[0,\mu(X)]\rightarrow[-\infty,+\infty]$
via
\[
f^{\ast}(t)=\begin{cases}
\underset{X}{\textup{ess\ sup}}\ f & \textup{if}\ t=0,\\
\inf\{s:\mu(\{x:s<f(x)\})\leq t\} & \textup{if}\ t\in(0,\mu(X)),\\
\underset{X}{\textup{ess\ inf}}\ f & \textup{if}\ t=\mu(X).
\end{cases}
\]
We call $f^{\ast}$ the decreasing rearrangement of $f$.
\end{definition}

We are now prepared to define star functions on general measure spaces.
\begin{definition}
[Star Function for a General Measure Space]\label{Def: Star Function on General Measure Space}Let
$f\in L^{1}(X)$. We define the star function of $f$ on the interval $[0,\mu(X)]$ by the formula
\[
f^{\bigstar}(t) =  \underset{\mu(E)=t}{\sup}\ {\displaystyle \int_{E}f\,d\mu},
\]
where the $\sup$ is taken over all measurable subsets $E\subseteq X$ with $\mu(E)=t$.
\end{definition}
Our next proposition collects two important facts about the star function. First, on nonatomic measure spaces, the $\sup$ defining $f^{\bigstar}$ is always achieved by some measurable subset $E$.  Second, there is a simple connection between the star function and the decreasing rearrangement. The result below appears as Proposition 9.2 of \cite{Baernstein Symmetrization in Analysis}.
\begin{proposition}\label{prop:genstarprop}
\label{prop:Star function achieved}Assume $f\in L^{1}(X)$ with $(X,\mu)$
a finite nonatomic measure space. Then for each $t\in[0,\mu(X)]$, there
exists a measurable subset $E\subseteq X$ with $\mu(E)=t$ such that
\[
f^{\bigstar}(t) = \int_{E}f\,d\mu.
\]
Moreover,
\[
f^{\bigstar}(t)=\int_0^tf^{\ast}(s)\,ds,
\]
where $f^{\ast}$ is the decreasing rearrangement of $f$.
\end{proposition}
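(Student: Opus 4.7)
\medskip
\noindent\textbf{Proof Plan.} The plan is to explicitly construct an optimal set $E$ as (essentially) a super-level set of $f$, verify optimality via a swapping argument, and then compute its integral by a layer-cake identification with $f^\ast$ on $[0,t]$.

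\medskip
\noindent\emph{Construction of $E$.} Fix $t \in [0,\mu(X)]$ and set $s = f^\ast(t)$. From the definition of the decreasing rearrangement, the distribution function $\mu_f(r) = \mu(\{f>r\})$ satisfies $\mu_f(s) \leq t$ and $\mu_f(r) > t$ for all $r < s$; the latter yields $\mu(\{f \geq s\}) \geq t$. Let $E_0 = \{f > s\}$ and $m_0 = \mu(E_0) \leq t$. Because $(X,\mu)$ is nonatomic, Sierpi\'nski's theorem guarantees a measurable subset $E_1 \subseteq \{f = s\}$ with $\mu(E_1) = t - m_0$. Set $E = E_0 \cup E_1$, so $\mu(E)=t$.

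\medskip
\noindent\emph{Optimality.} For any measurable $F \subseteq X$ with $\mu(F)=t$, the identity $\mu(E \setminus F) = \mu(F \setminus E)$ holds, and by construction $f \geq s$ on $E$ while $f \leq s$ on $X \setminus E$. Therefore
\[
\int_E f \, d\mu - \int_F f \, d\mu = \int_{E\setminus F} f\, d\mu - \int_{F \setminus E} f\, d\mu \geq s\,\mu(E\setminus F) - s\,\mu(F\setminus E) = 0,
\]
proving that the supremum defining $f^\bigstar(t)$ is attained at $E$.

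\medskip
\noindent\emph{Identification with $\int_0^t f^\ast\, ds$.} I would show that $f \mathbf{1}_E$ and $f^\ast \mathbf{1}_{[0,t]}$ are rearrangements of one another, from which the integral identity is immediate. For $r \geq s$, the set $\{x \in E : f(x) > r\}$ equals $\{f > r\}$ (since $E_0 \supseteq \{f>r\}$ in this range), which has measure $\mu_f(r)$; since $\mu_f(r) \leq \mu_f(s) \leq t$, one simultaneously has $|\{\tau \in [0,t] : f^\ast(\tau) > r\}| = \mu_f(r)$ from the inf-definition of $f^\ast$. For $r < s$, every point of $E$ satisfies $f \geq s > r$, giving measure $t$; and on the $f^\ast$ side, $r < s = f^\ast(t)$ forces $\mu_f(r) > t$, so the upper level set in $[0,t]$ is the entire interval, again of measure $t$. (A symmetric argument handles the lower tail when $s < 0$ by using the essinf branch of $f^\ast$.) This matches the distribution functions on both halves of the real line, so $f \mathbf{1}_E$ and $f^\ast \mathbf{1}_{[0,t]}$ are rearrangements and their integrals coincide, yielding $f^\bigstar(t) = \int_E f \, d\mu = \int_0^t f^\ast(s)\, ds$.

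\medskip
\noindent\emph{Main obstacle.} The step requiring the most care is the non-atomicity slicing that produces $E_1$ of exactly the right measure inside $\{f=s\}$: if $\mu(\{f=s\})$ is degenerate (zero or infinite atoms), one must argue directly from Sierpi\'nski/Lyapunov and keep track of sign conventions (and of the essential sup/inf boundary cases $t=0$ and $t=\mu(X)$). Once $E$ is built, the optimality and layer-cake identification are routine bookkeeping.
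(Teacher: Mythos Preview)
The paper does not supply its own proof of this proposition; it merely cites it as Proposition~9.2 of Baernstein's monograph. Your argument is the standard one and is correct: build $E$ as $\{f>s\}$ together with a Sierpi\'nski slice of $\{f=s\}$ (using $\mu(\{f\geq s\})\geq t\geq \mu(\{f>s\})$ to see the slice fits), verify optimality by the swap inequality, and match distribution functions with $f^\ast$ on $[0,t]$.

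Two small notational points. First, when you say ``$f\,\mathbf{1}_E$ and $f^\ast\,\mathbf{1}_{[0,t]}$ are rearrangements of one another,'' this should be read as $f$ restricted to $E$ versus $f^\ast$ restricted to $[0,t]$, viewed as functions on measure spaces of total mass $t$; the literal indicator-multiplied functions on $X$ and $[0,\mu(X)]$ need not be equimeasurable when $s\leq 0$. Your level-set computation (``giving measure $t$'' for $r<s$) is manifestly for the restrictions, so the argument is fine once the notation is clarified. Second, the parenthetical about a ``symmetric argument for the lower tail when $s<0$'' is unnecessary: your case split $r\geq s$ versus $r<s$ already exhausts all real $r$, and the sign of $s$ plays no role there.
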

The next result allows us to interpret star function inequalities in terms of convex means. See Propositions 10.1 and 10.3 of \cite{Baernstein Symmetrization in Analysis}.

\begin{proposition}
[Majorization]\label{Prop:Majorization}
Assume $(X,\mu)$ is a finite measure space and $u,v\in L^{1}(X)$. Then
\[
u^{\bigstar}  \leq  v^{\bigstar}
\]
on $[0,\mu(X)]$ if and only if
\[
\int_{X}\phi(u)\,d\mu  \leq  \int_{X}\phi(v)\,d\mu
\]
for every increasing convex function $\phi:\mathbb{R}\rightarrow\mathbb{R}$. If $\int_{X}u\,d\mu=\int_{X}v\, d\mu$, then the word ``increasing'' may be removed from the previous statement.
\end{proposition}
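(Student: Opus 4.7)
My plan is to reduce the majorization equivalence to a one-dimensional statement, then exploit the dual characterization of $u^{\bigstar}$ as an infimum of $\int_X (u-r)_+\, d\mu + tr$. The starting point is the identity $u^{\bigstar}(t) = \int_0^t u^{*}(s)\, ds$ from Proposition \ref{prop:genstarprop}, together with the equidistribution relation $\int_X \phi(u)\, d\mu = \int_0^{\mu(X)} \phi(u^{*}(s))\, ds$, which holds for any Borel $\phi$ with $\phi(u)\in L^1$. Since the star function inequality is already a statement about decreasing rearrangements, it suffices to compare integrals on $[0,\mu(X)]$ against increasing convex $\phi$.

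For the implication $u^{\bigstar}\le v^{\bigstar}\Rightarrow \int\phi(u)\le\int\phi(v)$, I would use the representation of an increasing convex function as
\[
\phi(x)=\phi(a)+\phi'(a+)(x-a)+\int_{a}^{\infty}(x-r)_{+}\, d\phi''(r),
\]
where $a$ is chosen below the essential infimum of $u$ and $v$ and $d\phi''\ge 0$ is the second-derivative measure. After verifying the extremal identity
\[
\int_{X}(u-r)_{+}\, d\mu=\sup_{t\in[0,\mu(X)]}\bigl[u^{\bigstar}(t)-rt\bigr],
\]
which is immediate from $u^{\bigstar}(t)=\int_0^t u^{*}(s)\,ds$ by taking $t=\mu(\{u>r\})$, the hypothesis $u^{\bigstar}\le v^{\bigstar}$ gives $\int(u-r)_{+}\le\int(v-r)_{+}$ for every $r$. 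Integrating this inequality against $d\phi''$ and adding in the affine terms (equal on the two sides once we use $u^{\bigstar}(\mu(X))\le v^{\bigstar}(\mu(X))$, i.e.\ $\int u\le \int v$) yields $\int \phi(u)\, d\mu\le\int\phi(v)\, d\mu$.

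For the converse, I would use the dual formula
\[
u^{\bigstar}(t)=\inf_{r\in\mathbb{R}}\Bigl[\int_X (u-r)_{+}\, d\mu+tr\Bigr],
\]
which I can verify by direct computation: the choice $r=u^{*}(t)$ makes the bracket equal $u^{\bigstar}(t)$ (after splitting $\int_0^{\mu(X)}(u^{*}-r)_{+}\, ds$ at the level $s=t$), while for every other $r$ a one-line calculation shows the bracket is $\ge u^{\bigstar}(t)$. Applying the convex means hypothesis to the increasing convex function $\phi(x)=(x-r)_{+}$ gives $\int(u-r)_{+}\le\int(v-r)_{+}$, and taking the infimum in $r$ of $\int(v-r)_{+}+tr$ delivers $v^{\bigstar}(t)$, yielding $u^{\bigstar}(t)\le v^{\bigstar}(t)$.

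For the equal-means addendum, the key observation is the algebraic identity $(r-x)_{+}=(x-r)_{+}-(x-r)$. Under the hypothesis $\int u\, d\mu=\int v\, d\mu$, the inequality for $(x-r)_{+}$ is equivalent to the inequality for the decreasing convex function $(r-x)_{+}$. Since every convex $\phi$ admits a representation as an affine function plus a positive combination of the building blocks $(x-r)_{+}$ and $(r-x)_{+}$ (via the decomposition of $d\phi''$ relative to any base point and, if needed, a truncation to the range of $u,v$), the conclusion follows from the two half-line cases. The main technical obstacle is precisely this representation step: one must justify the Fubini-type interchange that turns the integral of $\phi(u)-\phi(v)$ into an integral over $r$ of $\int(u-r)_{+}-\int(v-r)_{+}$, which requires either a smooth approximation of $\phi$ or a careful choice of base point to absorb unbounded affine tails when the $L^1$ functions $u,v$ are not assumed bounded.
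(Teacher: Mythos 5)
The paper does not give its own proof of this proposition; it is quoted from Propositions 10.1 and 10.3 of the Baernstein--Drasin--Laugesen monograph, so there is no in-paper argument to measure your proposal against. Your outline is the standard Hardy--Littlewood--P\'olya route, and it is substantially correct: the extremal identity $\int_X(u-r)_+\,d\mu=\sup_{t\in[0,\mu(X)]}\bigl[u^{\bigstar}(t)-rt\bigr]$ and its Legendre dual $u^{\bigstar}(t)=\inf_{r\in\mathbb{R}}\bigl[\int_X(u-r)_+\,d\mu+tr\bigr]$ are both verified correctly, and the forward direction, converse, and equal-means addendum all go through along the lines you sketch.

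Two remarks. First, your parenthetical that the affine terms are ``equal on the two sides'' is imprecise: after integrating, the affine contribution is $\phi(a)\mu(X)+\phi'(a+)\int_X(u-a)\,d\mu$ on one side and the corresponding expression with $v$ on the other, and what you actually use is that $\phi'(a+)\geq 0$ (because $\phi$ is increasing) together with $u^{\bigstar}(\mu(X))\leq v^{\bigstar}(\mu(X))$, i.e.\ $\int_X u\,d\mu\leq\int_X v\,d\mu$, to obtain ``$\leq$'', not ``$=$''; these terms are genuinely equal only in the equal-means addendum. Second, the technical point you flag at the end (there may be no base point $a$ below the essential infima of $u$ and $v$ when these $L^1$ functions are unbounded below) is real but routine to close: replace $\phi$ by $\phi_n$, the increasing convex function that agrees with $\phi$ on $[-n,\infty)$ and equals its tangent line at $-n$ on $(-\infty,-n)$. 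For $\phi_n$ the Taylor/second-derivative-measure representation with base point $-n$ is \emph{exact on all of} $\mathbb{R}$, so the bounded-base-point argument gives $\int_X\phi_n(u)\,d\mu\leq\int_X\phi_n(v)\,d\mu$ for every $n$; since $\phi_n\uparrow\phi$ pointwise and each $\phi_n(u)$, $\phi_n(v)$ is bounded below by an integrable affine function of $u$, $v$, monotone convergence yields the claim. In the equal-means case the same truncation applied to both one-sided tails of $d\phi''$ relative to a fixed base point, writing the left tail in terms of the blocks $(r-x)_+$, handles a general convex $\phi$.
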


The final result in this subsection gives further consequences of star function inequalities when the functions of interest have the same mean. The following result appears as Proposition 10.3 of \cite{Baernstein Symmetrization in Analysis}.
\begin{corollary}\label{cor:Lpnormsgen}
\label{cor:consequences of u star leq v star} Say $u,v\in L^{1}(X)$ where $X\subseteq \mathbb{R}$ is measurable subset of finite measure and assume $u^{\bigstar}\leq v^{\bigstar}$ on $[0,\mu(X)]$. If $\int_{X}u\,d\mu=\int_{X}v\,d\mu$, then
\[
\|u\|_{L^{p}(X)}  \leq  \|v\|_{L^{p}(X)},\quad1\leq p\leq +\infty.
\]
Moreover,
\begin{align*}
\underset{X}{\esssup}\ u & \leq  \underset{X}{\esssup}\ v,\\
\underset{X}{\essinf}\ u & \geq  \underset{X}{\essinf}\ v,\\
\underset{X}{\osc}\ u & \leq  \underset{X}{\osc}\ v,
\end{align*}
where $\osc=\esssup - \essinf$.
\end{corollary}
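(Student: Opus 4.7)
The plan is to extract every conclusion from the star-function inequality by feeding it into the majorization criterion of Proposition \ref{Prop:Majorization} with carefully chosen convex test functions. Since $u^{\bigstar}\leq v^{\bigstar}$ on $[0,\mu(X)]$, that proposition produces the convex-means inequality $\int_X \phi(u)\,d\mu \leq \int_X \phi(v)\,d\mu$ for every increasing convex $\phi:\mathbb{R}\to\mathbb{R}$, and the inequality extends to all convex $\phi$ whenever $\int_X u\,d\mu = \int_X v\,d\mu$. Every assertion in the corollary will follow from a shrewd choice of $\phi$.

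For the $L^p$ bound with $1\leq p<\infty$ I split into cases. If $u,v\geq 0$, I plug in the increasing convex function $\phi(x)=\chi_{[0,\infty)}(x)\,x^p$, whose composition with $u$ equals $u^p$ a.e.\ (and similarly for $v$); if instead $\int_X u\,d\mu=\int_X v\,d\mu$, I plug in the convex function $\phi(x)=|x|^p$. Either way $\int_X|u|^p\,d\mu\leq \int_X|v|^p\,d\mu$, and taking $p$-th roots yields $\|u\|_{L^p(X)}\leq \|v\|_{L^p(X)}$. The $L^\infty$ case follows either by letting $p\to\infty$ (using that $\mu(X)<\infty$) or, more directly, from the increasing convex test function $\phi_c(x)=(x-c)_+$: for any $c\geq \esssup_X v$ the right-hand side vanishes, forcing $\esssup_X u \leq c$, hence $\esssup_X u\leq \esssup_X v$. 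In the nonnegative case this already gives $\|u\|_{L^\infty}\leq \|v\|_{L^\infty}$.

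To obtain the $\essinf$ and oscillation inequalities under equal means, I first upgrade the hypothesis to $(-u)^{\bigstar}\leq (-v)^{\bigstar}$. This is immediate from the two-way version of Proposition \ref{Prop:Majorization}: for an arbitrary convex $\psi:\mathbb{R}\to\mathbb{R}$ the function $\phi(x):=\psi(-x)$ is again convex, so
\[
\int_X \psi(-u)\,d\mu=\int_X \phi(u)\,d\mu \leq \int_X \phi(v)\,d\mu=\int_X \psi(-v)\,d\mu,
\]
and the converse direction of the proposition (applicable since $\int_X(-u)\,d\mu=\int_X(-v)\,d\mu$) delivers the star-function inequality for $-u$ and $-v$. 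Applying the $\esssup$ bound already established to the pair $-u,-v$ yields $\essinf_X u\geq \essinf_X v$, the oscillation bound follows by subtraction, and the $L^\infty$ inequality in the equal-mean case comes from combining the two one-sided bounds via $\|w\|_{L^\infty(X)}=\max(\esssup_X w,\,-\essinf_X w)$.

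There is no genuine obstacle in this corollary; all it really asks is bookkeeping to distinguish the two hypotheses (nonnegativity versus equal means), which enable slightly different families of admissible $\phi$. The content is entirely in Proposition \ref{Prop:Majorization}; the corollary merely repackages it into the form convenient for the later pde applications.
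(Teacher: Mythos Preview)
Your proof is correct. The paper does not actually supply its own argument for this corollary; it simply cites Corollary 10.2 and Proposition 10.3 of Baernstein's \emph{Symmetrization in Analysis}, so there is no detailed proof to compare against. Your derivation is exactly the natural one: everything is pulled from Proposition~\ref{Prop:Majorization} by choosing the convex test functions $\phi(x)=\chi_{[0,\infty)}(x)\,x^p$, $\phi(x)=|x|^p$, and $\phi_c(x)=(x-c)_+$, together with the observation that under equal means the majorization passes to $-u,-v$ via $\phi(x)=\psi(-x)$. One small remark: in the step where you conclude $(-u)^{\bigstar}\leq(-v)^{\bigstar}$, you invoke the converse of Proposition~\ref{Prop:Majorization} ``applicable since $\int_X(-u)\,d\mu=\int_X(-v)\,d\mu$,'' but in fact you have already established the inequality $\int_X\psi(-u)\,d\mu\leq\int_X\psi(-v)\,d\mu$ for \emph{all} convex $\psi$, hence in particular for increasing convex $\psi$, so the basic biconditional suffices without the equal-means clause. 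This is cosmetic; the argument stands.
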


\subsection{Spherical Shells: Symmetrization, Star Functions, and Subharmonicity.}
In this subsection, we collect several key definitions and results for functions defined on spherical shells. These results drive the proof of Theorem \ref{Th:shell}. Much of the information in this subsection also appears in Section 2.2 of \cite{Langford1}.

We first discuss rearrangements of functions defined on spheres. Denote
\[
\mathbb{S}^{n}=\{(\xi_{1},\xi_{2},\ldots,\xi_{n+1})\in\mathbb{R}^{n+1}:\xi_{1}^{2}+\xi_{2}^{2}+\cdots+\xi_{n+1}^{2}=1\}
\]
for the standard unit sphere. We write $\sigma$ for standard surface measure on $\mathbb{S}^n$ and $d$ for the spherical distance, computed using lengths of arcs of great circles. Let
\[
K(\theta) = \{\xi\in\mathbb{S}^{n}:d(\xi,e_{1})<\theta\}
\]
denote the open polar cap, or geodesic ball, on $\mathbb{S}^n$ centered at $e_{1}=(1,0,\ldots,0)$
with radius $\theta$.

We next define the spherical rearrangement. Notice that this definition is the analogue of the Schwarz rearrangement in the spherical setting.
\begin{definition}
[Spherical Rearrangement]\label{def:spherrearr}Given $F\in L^{1}(\mathbb{S}^{n})$, we
define $F^{\#}:\mathbb{S}^{n}\rightarrow[-\infty,+\infty]$ by the
formula
\begin{eqnarray*}
F^{\#}(\xi) & = & F^{\ast}\left(\sigma\left(K(\theta)\right)\right),
\end{eqnarray*}
where $\theta$ is the spherical distance between the point $\xi$
and $e_{1}$, and $F^{\ast}$ is the decreasing rearrangement of $F$.
We call $F^{\#}$ the spherical rearrangement of $F$.
\end{definition}

For $0< a<b<+\infty$, we write
\[
A=A(a,b)=\{x\in\mathbb{R}^{n}:a<|x|<b\}
\]
for the spherical shell with inner radius $a$ and outer radius $b$. If $f\in L^1(A)$, the cap symmetrization of $f$ is obtained by spherically rearranging $f$'s radial slice functions. More precisely, we have the following definition.
\begin{definition}
[Cap Symmetrization]\label{def:capsymm} Suppose $f\in L^{1}(A)$. We define $f^{\#}:A\rightarrow[-\infty,+\infty]$ as follows. By Fubini's Theorem, the slice function $f^{r}:\mathbb{S}^{n}\rightarrow\mathbb{R}$ defined by $f^{r}(\xi)=f(r\xi)$ belongs to $L^{1}(\mathbb{S}^{n-1})$ for almost every $r\in(a,b)$.  For such $r$, the cap symmetrization of $f$ on $\{|x|=r\}$ is defined by 
\begin{eqnarray*}
f^{\#}(r\xi) & = & (f^{r})^{\#}(\xi),
\end{eqnarray*}
 where $(f^{r})^{\#}$ denotes the spherical rearrangement of the
slice function $f^{r}$. We leave $f^{\#}$ undefined on those spheres
$\{|x|=r\}$ for which $f^{r}\not\in L^{1}(\mathbb{S}^{n-1})$.
\end{definition}
Given a function defined on a spherical shell $A(a,b)$, its star function is defined in the polar rectangle
\[
A^{\bigstar}=\{(r,\theta)\in\mathbb{R}^{2}:a<r<b\text{\ and\ }0<\theta<\pi\}.
\]
We have the following definition.
\begin{definition}
[Star Functions on Spherical Shells]\label{Def: Star Functions in Shells}If
$f\in L^{1}(A)$, define $f^{\bigstar}:A^{\bigstar}\rightarrow\mathbb{R}$
a.e. by the formula
\[
f^{\bigstar}(r,\theta)=  \underset{\sigma(E)=\sigma(K(\theta))}{\max}\ {\displaystyle \int_{E}f(r\xi)\ \text{d}\sigma(\xi)}=  \int_{K(\theta)}f^{\#}(r\xi)\ \text{d}\sigma(\xi),
\]
where the $\max$ is taken over all measurable subsets $E$ of $\mathbb{S}^{n-1}$
with the same surface measure as $K(\theta)$ and $f^{\#}$ denotes
the cap symmetrization of $f$.
\end{definition}
Several remarks are in order. First, notice that the formula for $f^{\bigstar}(r,\theta)$ only makes sense if the radial slice function $f^{r}\in L^{1}(\mathbb{S}^{n-1})$. By Fubini's Theorem, $f^{\bigstar}(r,\theta)$ is well-defined for almost every $r\in (a,b)$ and for each $\theta \in (0,\pi)$. Second, the definition of $f^{\bigstar}$ given above is obtained by applying the star function of Definition \ref{Def: Star Function on General Measure Space} to each of $f$'s radial slice functions, up to a change of variable. Moreover, since $(\mathbb{S}^n,\sigma)$ is nonatomic,  Proposition \ref{prop:Star function achieved} tells us we are justified in using either $\sup$ or $\max$ to define $f^{\bigstar}$, since the $\max$ is obtained by some appropriate subset.

We close this subsection by stating ``commutativity'' and ``subharmonicity'' results
for cap symmetrization. With $A$ as above, suppose $u\in L^{1}(A)$. Define $Ju:A^{\bigstar}\rightarrow\mathbb{R}$
a.e. by
\begin{equation}\label{eq:Jucapshell}
Ju(r,\theta)=\int_{K(\theta)}u(r\xi)\,d\sigma(\xi).
\end{equation}
If $u^{\#}$ is the cap symmetrization of $u$, we therefore have
\begin{equation}\label{eq:ustareq}
u^{\bigstar}(r,\theta)=\int_{K(\theta)}u^{\#}(r\xi)\,d\sigma(\xi)=Ju^{\#}(r,\theta).
\end{equation}

Define operators $\Delta^{\bigstar}$ and $\Delta^{\bigstar t}$ acting on $F\in C^{2}(A^{\bigstar})$ via the formulas 
\begin{align}
\Delta^{\bigstar}F & =  \partial_{rr}F+\frac{n-1}{r}\partial_{r}F+r^{-2}[\partial_{\theta\theta}F-(n-2)(\cot\theta)\partial_{\theta}F],\label{eq:Delta star cap symmetrization-1}\\
\Delta^{\bigstar t}F & =  \partial_{rr}F-\frac{n-1}{r}\partial_{r}F+r^{-2}(n-1)F\nonumber  \nonumber \\
 & \qquad +r^{-2}[\partial_{\theta\theta}F+(n-2)(\cot\theta)\partial_{\theta}F-(n-2)(\csc^{2}\theta)F].\nonumber 
\end{align}
Then for $F\in C^2(A^{\bigstar})$ and $G\in C_c^2(A^{\bigstar})$, we have
\[
\int_{A^{\bigstar}}(\Delta ^{\bigstar}F)G\,dr\,d\theta=\int_{A^{\bigstar}}F\Delta^{\bigstar t}G\,dr\,d\theta.
\]
In other words, $\Delta^{\bigstar t}$ is formally the adjoint of $\Delta^{\bigstar}$ when $A^{\bigstar}$ is equipped with the measure $dr\,d\theta$.

The following results appear as Theorems 2.18 and 2.20 in \cite{Langford1}.
\begin{theorem}
[Commutativity Relation for Cap Symmetrization]\label{thm:J comm relation n dim}If
$u\in C^{2}(A)$, then 
\[
J\Delta u = \Delta^{\bigstar}Ju
\]
on $A^{\bigstar}$.
\end{theorem}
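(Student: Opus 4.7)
The plan is to decompose the Euclidean Laplacian in spherical coordinates as
\[
\Delta u = \partial_{rr}u + \frac{n-1}{r}\partial_r u + \frac{1}{r^2}\Delta_{\mathbb{S}^{n-1}}u,
\]
to integrate each summand against $d\sigma(\xi)$ over the cap $K(\theta)$, and to match the result, term by term, with the corresponding summand in the definition of $\Delta^{\bigstar}(Ju)$.

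The radial contributions are straightforward. Since $K(\theta)$ does not depend on $r$, $u \in C^2$ justifies differentiation under the integral sign, giving
\[
\int_{K(\theta)}\partial_r u(r\xi)\,d\sigma(\xi) = \partial_r(Ju)(r,\theta),\qquad \int_{K(\theta)}\partial_{rr}u(r\xi)\,d\sigma(\xi) = \partial_{rr}(Ju)(r,\theta),
\]
so the first two summands of $J(\Delta u)$ reproduce the first two summands of $\Delta^{\bigstar}(Ju)$.

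The crux is the angular summand. Fix $r$ and set $v(\xi) = u(r\xi)$; I must show
\[
\int_{K(\theta)} \Delta_{\mathbb{S}^{n-1}} v \, d\sigma = \partial_{\theta\theta}(Jv)(\theta) - (n-2)\cot\theta\, \partial_\theta(Jv)(\theta).
\]
I would rewrite the left side, via the Riemannian divergence theorem on $\mathbb{S}^{n-1}$, as a boundary integral $\int_{\partial K(\theta)} \partial_\nu v\, d\tilde\sigma$, where $\partial_\nu$ is the outward intrinsic conormal. Introducing geodesic polar coordinates $(\phi,\omega) \in (0,\pi)\times \mathbb{S}^{n-2}$ centered at $e_1$ (so $\phi = d(e_1,\cdot)$), one has $d\sigma = \sin^{n-2}\phi\,d\phi\,d\omega$ on $\mathbb{S}^{n-1}$ and $d\tilde\sigma = \sin^{n-2}\theta\,d\omega$ on $\partial K(\theta)$. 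Writing $V(\theta) = \int_{\mathbb{S}^{n-2}} v(\theta,\omega)\,d\omega$, the boundary integral becomes $\sin^{n-2}\theta\cdot V'(\theta)$, while the fundamental theorem of calculus gives $\partial_\theta(Jv) = \sin^{n-2}\theta\cdot V(\theta)$. Differentiating once more and subtracting $(n-2)\cot\theta\,\partial_\theta(Jv)$ then cancels the $V$-term and leaves exactly $\sin^{n-2}\theta\cdot V'(\theta)$.

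The main obstacle is this angular calculation: because varying $\theta$ moves the domain of integration, one cannot simply pass $\Delta_{\mathbb{S}^{n-1}}$ through the integral sign. The divergence theorem on the sphere is what bridges this, and the mysterious coefficient $(n-2)\cot\theta$ appearing in $\Delta^{\bigstar}$ is then recognized as exactly what is needed, being $(\log \sin^{n-2}\theta)'$, the logarithmic derivative of the cap-boundary weight.
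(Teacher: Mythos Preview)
Your argument is correct. The paper itself does not supply a proof of this commutativity relation; it simply quotes the result from \cite{Langford1} (Theorems~2.18 and~2.20 there), so strictly speaking there is nothing in the present paper to compare against. That said, your route---splitting $\Delta$ into its radial and spherical parts, passing the radial derivatives through the cap integral, and handling the spherical Laplacian via the intrinsic divergence theorem on $\mathbb{S}^{n-1}$ in geodesic polar coordinates---is exactly the standard one and is, in essence, how the cited reference proceeds. Your identification of $(n-2)\cot\theta$ as the logarithmic derivative of the boundary weight $\sin^{n-2}\theta$ is the right way to see why the operator $\Delta^{\bigstar}$ has the form it does.
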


Our last result requires the following definition:
\begin{definition}
For $u\in C^{2}(A)$ and $f\in L_{loc}^{1}(A)$, we say that $-\Delta^{\bigstar}u^{\bigstar}\leq f^{\bigstar}$ holds
in the weak sense if
\[
-\int_{A^{\bigstar}}u^{\bigstar}\Delta^{\bigstar t}G\,dr\,d\theta\leq\int_{A^{\bigstar}}f^{\bigstar}G\,dr\,d\theta
\]
for every nonnegative $G\in C_{c}^{2}(A^{\bigstar})$.\end{definition}
We finally have the following result, the major tool in proving Theorem \ref{Th:shell}.
\begin{theorem}
[Subharmonicity for Cap Symmetrization]\label{thm:Subharmonicity n dim cap}Suppose
$u\in C^{2}(A)$ satisfies $-\Delta u=f$. Then
\[
-\Delta^{\bigstar}u^{\bigstar}  \leq  f^{\bigstar}
\]
 in the weak sense.
 \end{theorem}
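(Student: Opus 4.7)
The plan is to verify the weak inequality by exhibiting $u^{\bigstar}$ as the limit of a family of smooth pointwise subsolutions supplied by the commutativity relation, then passing to the limit against an arbitrary nonnegative test function $G \in C_c^2(A^{\bigstar})$.

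First I would use rotations as the basic building blocks. For each $\rho \in SO(n)$, set $u_\rho(x) = u(\rho x)$ and $f_\rho(x) = f(\rho x)$; then $u_\rho \in C^2(A)$ and $-\Delta u_\rho = f_\rho$. The commutativity relation of Theorem \ref{thm:J comm relation n dim}, applied to $u_\rho$, yields the pointwise identity
\[
-\Delta^{\bigstar}(Ju_\rho) = J(-\Delta u_\rho) = Jf_\rho \quad \text{on } A^{\bigstar}.
\]
By the rotation invariance of $\sigma$, $Jf_\rho(r,\theta) = \int_{\rho(K(\theta))} f(r\eta) \, d\sigma(\eta) \leq f^{\bigstar}(r,\theta)$ follows from the very definition of the star function. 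Hence each $Ju_\rho$ is a smooth, and therefore weak, subsolution of $-\Delta^{\bigstar} w = f^{\bigstar}$.

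The main obstacle is that $u^{\bigstar}$ strictly exceeds $\sup_{\rho \in SO(n)} Ju_\rho$ in general, since the supremum realizing $u^{\bigstar}(r,\theta)$ is attained on a super-level set of $u(r\cdot)$ on $\mathbb{S}^{n-1}$, which is typically not a rotated polar cap. I would bridge this gap with polarization on each spherical slice. For each hyperplane $H$ through the origin whose distinguished half-space $H^+$ contains $e_1$, define $P_H u$ to equal $\max(u, u \circ R_H)$ on $A \cap H^+$ and $\min(u, u \circ R_H)$ on $A \cap H^-$, where $R_H$ denotes reflection across $H$. Classical polarization theory provides a sequence $H_1, H_2, \ldots$ of hyperplanes such that the iterated polarizations $P_{H_N}\cdots P_{H_1} u$ converge a.e. on each sphere $\{|x|=r\}$ to the cap-symmetrized slice $u^{\#}(r\cdot)$, so that the corresponding $J$-transforms converge uniformly on compact subsets of $A^{\bigstar}$ to $J u^{\#} = u^{\bigstar}$. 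The essential technical lemma, which I expect to be the crux of the argument, is a preservation statement: if $v$ is piecewise $C^2$ on $A$ and $-\Delta^{\bigstar}(Jv) \leq f^{\bigstar}$ holds in the weak sense, then so does $-\Delta^{\bigstar}(J P_H v) \leq f^{\bigstar}$.

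The hard part will be this polarization-preservation lemma, because $P_H v$ has a Lipschitz seam along $H$ where $v$ and $v \circ R_H$ cross. I would attack it by mollifying $P_H v$, applying the commutativity relation on $A \cap H^+$ and $A \cap H^-$ separately, and then reassembling via a Stampacchia-type chain-rule calculation that tracks the boundary integrals along the seam. The favorable sign of these seam integrals should follow from the Hardy-Littlewood inequality comparing $Jf_{R_H}$ and $Jf$ against the polar cap $K(\theta)$, together with the observation that polarization shifts $f$-mass toward $H^+$ and hence toward the cap. Once the preservation lemma is established, iterating along $H_1, \ldots, H_N$ and using dominated convergence in the pairing $\int J(P_{H_N}\cdots P_{H_1} u) \cdot (-\Delta^{\bigstar t} G)\,dr\,d\theta \leq \int f^{\bigstar} G\,dr\,d\theta$ delivers the desired weak inequality $-\Delta^{\bigstar} u^{\bigstar} \leq f^{\bigstar}$.
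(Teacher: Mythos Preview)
The paper does not prove this theorem in-text; it quotes it from \cite{Langford1} (and the argument there follows Baernstein's star-function machinery in \cite{Baernstein Cortona Volume}, \cite{Baernstein Symmetrization in Analysis}). That proof is structurally different from yours: for a fixed $(r_0,\theta_0)$ one takes the super-level set $E_0\subseteq\mathbb{S}^{n-1}$ of $u(r_0\,\cdot)$ with $\sigma(E_0)=\sigma(K(\theta_0))$, so that $u^{\bigstar}(r_0,\theta_0)=\int_{E_0}u(r_0\xi)\,d\sigma$. The radial part of $\Delta^{\bigstar}$ is handled by the touching inequality $u^{\bigstar}(r,\theta_0)\geq\int_{E_0}u(r\xi)\,d\sigma$ with equality at $r_0$, and the angular part comes from the divergence theorem on $E_0$ together with the isoperimetric inequality on $\mathbb{S}^{n-1}$ applied to the level set $\partial E_0$. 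No polarization, no iteration; the extremal level set does all the work at each point.

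Your route via rotations plus iterated polarizations is a genuinely different idea, and the first step (rotations give smooth subsolutions via commutativity) is clean. The gap is the preservation lemma. Two concrete issues. First, after even one polarization, $P_Hu$ is only Lipschitz and $-\Delta(P_Hu)$ is a \emph{measure}: away from the seam it equals $f$ or $f\circ R_H$ according to which of $u,\,u\circ R_H$ is larger (not according to which of $f,\,f\circ R_H$ is larger), and on the seam it carries a signed singular part --- negative on $H^+$ (from the max) and positive on $H^-$ (from the min). The ``favorable sign'' you assert really amounts to showing that, on each sphere, the $H^+$ singular mass inside $K(\theta)$ dominates the $H^-$ singular mass; this uses $R_HK(\theta)\cap H^+\subseteq K(\theta)\cap H^+$ and that the seam is $R_H$-invariant, which is true but not what a Stampacchia truncation delivers. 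Second, and more seriously, your induction is stated for ``piecewise $C^2$'' $v$ satisfying the weak inequality for $Jv$, but that hypothesis on $Jv$ alone does not encode the PDE information (a.c.\ part rearranging $f$, controlled singular part) that the next polarization step actually needs. To iterate you must track the full distributional structure of $-\Delta v_k$ through each $P_{H_k}$, and also extend the commutativity relation $\Delta^{\bigstar}Jv=J\Delta v$ to this measure-valued setting; neither is supplied. Until that bookkeeping is carried out, the preservation lemma is a hope rather than a proof.
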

 
\subsection{Generalized Cylinders: Symmetrization, Star Functions, and Subharmonicity.}
The structure of this subsection mimics the structure of the last one, as our goal is to gather all of the machinery necessary to prove our third main result, Theorem \ref{th:cyl}. Much of the information gathered here also appears in Sections 2 and 3 of \cite{Langford3}. The major change in this subsection is that here, we consider cylindrical domains rather than shell domains. To start, let $D\subseteq \mathbb{R}^{n-1}$ denote a bounded $C^{\infty}$ domain and say $\ell>0$. Define
\begin{equation}\label{eq:omegadef}
\Omega=D\times (0,\ell).
\end{equation}
Points in $\Omega$ will be denoted by pairs $(x,y)$ with $x\in D$ and $y\in (0,\ell)$. Given a function $f$ on $\Omega$, we can perform a decreasing rearrangement in the $y$-variable, holding each $x\in D$ fixed. Doing so gives the decreasing rearrangement of $f$ in the $y$-direction. More precisely, we have:
\begin{definition}[Decreasing Rearrangement in the $y$-direction]\label{def:decy}
Let $f\in L^1(\Omega)$ with $\Omega$ as in  \eqref{eq:omegadef}. Define a function $f^{\#}:\Omega\to [-\infty,+\infty]$ a.e. by
\[
f^{\#}(x,y)=(f^x)^{\ast}(y),
\]
where $f^x$ denotes the slice function that maps $y\in (0,\ell)$ to $f(x,y)$ and $\ast$ denotes the decreasing rearrangement. We call $f^{\#}$  the \textit{decreasing rearrangement of $f$ in the y-direction}.
\end{definition}

Given $u \in L^1({\Omega})$, Fubini's Theorem guarantees that for almost every $x\in D$, the slice function $f^x\in L^1(0,\ell)$. We are therefore led to define $Ju$ and $u^{\bigstar} $ a.e. on $\Omega$ via
\[
Ju(x,y)=\int_0^yu(x,t)\,d t
\]
and
\begin{equation}\label{eq:ustarcyl}
u^{\bigstar}(x,y)=Ju^{\#}(x,y)=\int_0^yu^{\#}(x,t)\,d t
\end{equation}
where $u^{\#}$ denotes the monotone decreasing rearrangement of $u$ in the $y$-direction. By Proposition \ref{prop:genstarprop},
\[
u^{\bigstar}(x,y)=\underset{\mathcal L^1(E)=y}{\max}\int_Eu(x,t)\,d t,
\]
where the $\max$ is taken over all measurable subsets $E$ of $(0,\ell)$ with one-dimensional Lebesgue measure (length) equal to $y$.

We are now prepared to state our commutativity result for the cylindrical setting. See Proposition 2 of \cite{Langford2}.

\begin{proposition}[Commutativity for Decreasing Rearrangement in $y$-direction]\label{Prop:CommutativityCyl}
Let $\Omega$ be as in \eqref{eq:omegadef}. If $u\in C^2(\overline{\Omega})$ satisfies $u_y(x,0)=0$ for $x\in D$, then
\[
J\Delta u=\Delta Ju
\]
in $\Omega$.
\end{proposition}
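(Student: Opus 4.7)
The plan is a direct computation in which we split the Laplacian into its horizontal and vertical parts, $\Delta = \Delta_x + \partial_{yy}$, where $\Delta_x$ denotes the Laplacian in the first $n-1$ variables. Because $u\in C^2(\overline{\Omega})$ and $\overline{\Omega}$ is compact, $u$ together with its first and second partials is continuous and bounded on $\overline{\Omega}$, which justifies differentiation under the integral sign and lets $\Delta_x$ pass through the $t$-integration:
\[
\Delta_x Ju(x,y) = \int_0^y \Delta_x u(x,t)\,dt.
\]
The fundamental theorem of calculus then gives $\partial_y Ju(x,y)=u(x,y)$, and differentiating once more yields $\partial_{yy}Ju(x,y)=u_y(x,y)$.

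On the other side, by linearity of $J$,
\[
J(\Delta u)(x,y) = \int_0^y \Delta_x u(x,t)\,dt + \int_0^y u_{tt}(x,t)\,dt.
\]
The first integral already agrees with $\Delta_x Ju(x,y)$. For the second, the fundamental theorem of calculus in the variable $t$ gives
\[
\int_0^y u_{tt}(x,t)\,dt = u_y(x,y) - u_y(x,0) = u_y(x,y),
\]
where the boundary hypothesis $u_y(x,0)=0$ is precisely what kills the contribution at $t=0$. Summing the two pieces,
\[
J(\Delta u)(x,y) = \Delta_x Ju(x,y) + \partial_{yy}Ju(x,y) = \Delta Ju(x,y),
\]
which is the desired identity.

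There is no real obstacle here; the argument collapses to the Leibniz rule together with the one-dimensional fundamental theorem of calculus. The one point worth flagging is that the Neumann hypothesis $u_y(x,0)=0$ is indispensable rather than cosmetic: without it, an extra additive term $-u_y(x,0)$ would persist on the right-hand side and destroy commutativity. This is exactly why, in the application to Theorem \ref{th:cyl}, the Neumann condition on $\partial \Omega_1$ is imposed, so that this proposition may be applied to the relevant PDE solutions $u$ and $v$.
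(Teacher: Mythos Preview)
Your argument is correct. The paper itself does not give a proof of this proposition but simply cites Proposition~2 of \cite{Langford2}; your direct computation---splitting $\Delta=\Delta_x+\partial_{yy}$, passing $\Delta_x$ through the integral by differentiation under the integral sign, and handling the $\partial_{yy}$ part via the fundamental theorem of calculus together with the hypothesis $u_y(x,0)=0$---is the natural argument and is almost certainly what appears in the cited reference.
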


We finally have the following subharmonicity result in the cylindrical setting. See Theorem 1 of \cite{Langford2}.

\begin{theorem}[Subharmonicity for Decreasing Rearrangement in $y$-direction]\label{Thm:SubharmonicityCyl}
Let $\Omega$ be as in \eqref{eq:omegadef}. Say $u\in C^2(\overline{\Omega})$ and $u_y(x,0)=u_y(x,\ell)=0$ for $x\in D$. If $-\Delta u=f$ in $\Omega$, then
\[
-\Delta u^{\bigstar}\leq f^{\bigstar}
\]
in the weak sense in $\Omega$. That is,
\[
-\int_{\Omega}u^{\bigstar}\Delta G \,dx \,dy \leq \int_{\Omega}f^{\bigstar}G \,d x\,dy
\]
for nonnegative $G\in C_c^2(\Omega$).
\end{theorem}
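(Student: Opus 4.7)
The plan is to set $w = u^{\bigstar} - Jv$, show that $w$ is weakly subharmonic on $\Omega$ with identifiable mixed Robin/Neumann boundary behavior, and conclude $w \leq 0$ by testing against $w_+$.

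First I would extract the differential inequality for $w$. Because $u_y(x,0) = u_y(x,\ell) = 0$ from the Neumann conditions on $\partial \Omega_1$, Theorem \ref{Thm:SubharmonicityCyl} applies and yields $-\Delta u^{\bigstar} \leq f^{\bigstar}$ weakly on $\Omega$. For $v$, the hypothesis $v_y(x,0) = 0$ puts Proposition \ref{Prop:CommutativityCyl} in force, and since $f^{\#}$ is already decreasing in $y$ one computes classically $-\Delta Jv = -J\Delta v = Jf^{\#} = f^{\bigstar}$. Subtracting gives $-\Delta w \leq 0$ weakly on $\Omega$.

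Next I would pin down the boundary behavior of $w$. On the bottom $y=0$, both $u^{\bigstar}$ and $Jv$ vanish. On the top $y=\ell$, set $U(x) := u^{\bigstar}(x,\ell) = \int_0^{\ell} u(x,t)\,dt$ and $V(x) := Jv(x,\ell) = \int_0^{\ell} v(x,t)\,dt$; integrating each Poisson equation in $y$, using the Neumann conditions on $\partial \Omega_1$ to kill vertical flux terms, and invoking the slicewise equimeasurability $\int_0^{\ell} f\,dt = \int_0^{\ell} f^{\#}\,dt$, one finds that $U$ and $V$ solve the same elliptic problem on $D$: a Robin problem $-\Delta_x W = F$, $W_{\nu} + \alpha W = 0$ when $\alpha > 0$, or a Neumann problem with matching zero-mean normalization when $\alpha = 0$ (the normalizations match because $\int_D U\,dx = \int_{\Omega} u\,dx = 0 = \int_D V\,dx$). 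Uniqueness forces $U \equiv V$, hence $w \equiv 0$ at $y=\ell$. On the lateral boundary $\partial D \times (0,\ell)$, I would use the layer-cake representation $u^{\bigstar}(x,y) = \int_{E(x,y)} u(x,t)\,dt$ with $E(x,y) = \{t : u(x,t) > \lambda(x,y)\}$ the super-level set of measure $y$; an envelope argument (the variation of $\int_E u$ over measure-$y$ deformations of $E$ vanishes at the maximizer) gives $u^{\bigstar}_{\nu_x}(x,y) = \int_{E(x,y)} u_{\nu_x}(x,t)\,dt = -\alpha u^{\bigstar}(x,y)$. The direct calculation $(Jv)_{\nu_x} = \int_0^y v_{\nu_x}\,dt = -\alpha Jv$ handles $Jv$, so $w_{\nu_x} + \alpha w = 0$ on $\partial D \times (0,\ell)$.

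Finally I would close via a maximum principle. After extending the weak inequality $-\Delta w \leq 0$ from $G \in C_c^2(\Omega)$ to $G = w_+ \in H^1(\Omega)$ (justified by the $C^2(\overline{\Omega})$ regularity of $u,v$ and a standard approximation), integration by parts together with the boundary conditions yields
\[
0 \geq \int_{\Omega} (-\Delta w)\, w_+\,dx\,dy = \int_{\Omega} |\nabla w_+|^2\,dx\,dy + \int_{\partial \Omega_2} \alpha w_+^2\,dS,
\]
since $w_+ = 0$ on $\partial \Omega_1$ and $w_\nu = -\alpha w$ on $\partial \Omega_2$. The right side is nonnegative, forcing $w_+ \equiv 0$: when $\alpha > 0$ this is immediate; when $\alpha = 0$, $w_+$ must be constant on each component and already vanishes on $\partial \Omega_1$. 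Hence $u^{\bigstar} \leq Jv$ on $\Omega$. The convex-means statement then follows from Proposition \ref{Prop:Majorization} applied slicewise in $y$, using the chain $u^{\bigstar}(x,\cdot) \leq Jv(x,\cdot) \leq v^{\bigstar}(x,\cdot)$ together with the equal-mean identity $U \equiv V$, which removes the ``increasing'' restriction on $\phi$. The main obstacle I anticipate is the lateral Robin condition for $u^{\bigstar}$: the envelope argument is natural but requires careful bookkeeping of how $E(x,y)$ depends on $x$ under the constraint $|E(x,y)| = y$, perhaps most cleanly done by rewriting via the layer-cake/co-area formula and applying the Robin condition on $u$ slicewise.
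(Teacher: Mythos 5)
You have proved the wrong statement. The theorem in question is the \emph{subharmonicity lemma} (Theorem \ref{Thm:SubharmonicityCyl}): given a single function $u \in C^2(\overline{\Omega})$ with $u_y(x,0)=u_y(x,\ell)=0$ and $-\Delta u = f$, one must show that $-\Delta u^{\bigstar} \leq f^{\bigstar}$ holds in the weak sense, i.e.\ that $-\int_{\Omega} u^{\bigstar}\Delta G \leq \int_{\Omega} f^{\bigstar} G$ for all nonnegative $G \in C_c^2(\Omega)$. Your proposal instead proves the mixed Robin/Neumann comparison principle on cylinders (Theorem \ref{th:cyl}), which compares the solutions $u$ and $v$ of two separate Poisson problems. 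These are different statements: one is a pointwise weak differential inequality for a single function's star function, and the other is a comparison theorem deduced from that inequality via a maximum principle.

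Worse, the proposal is circular as written. Your very first step reads ``Theorem \ref{Thm:SubharmonicityCyl} applies and yields $-\Delta u^{\bigstar} \leq f^{\bigstar}$ weakly on $\Omega$'' — you are invoking the statement to be proved as a lemma. Everything after that (the boundary bookkeeping along $\partial\Omega_1$ and $\partial\Omega_2$, the energy estimate against $w_+$) is machinery for Theorem \ref{th:cyl}, not for Theorem \ref{Thm:SubharmonicityCyl}. In the paper, Theorem \ref{Thm:SubharmonicityCyl} is not proved at all; it is quoted as background (Theorem 1 of the cited Langford paper on cylinders). A correct proof would have to argue directly about the operator $J$ and the decreasing rearrangement in $y$: typically one shows $\Delta J u = J\Delta u$ (the commutativity relation, Proposition \ref{Prop:CommutativityCyl}, using the Neumann conditions at $y=0,\ell$), then establishes weak subharmonicity of $u^{\bigstar}(x,\cdot)$ relative to $Ju(x,\cdot)$ in the slice variable, and combines these via a localization/approximation of the sup defining $u^{\bigstar}$ as an integral over an optimal level set. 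None of that appears in your proposal.

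As a secondary observation, your strategy for the theorem you did (inadvertently) address also deviates from the paper's: the paper proves Theorem \ref{th:cyl} with an explicit concave barrier $Q(x,y)=Q_1(x)+Cy(\ell-y)$ and a pointwise weak maximum principle (Littman), whereas you use an energy method, integrating against $w_+ \in H^1$. Your lateral-boundary step (the ``envelope argument'' giving $u^{\bigstar}_{\nu_x}=-\alpha u^{\bigstar}$) would need genuine justification, since $u^{\bigstar}$ is in general only Lipschitz, not $C^1$, and the optimal set $E(x,y)$ need not depend smoothly on $x$; the paper sidesteps exactly this issue by working with difference quotients and a $\liminf$ rather than asserting a classical normal derivative. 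But the decisive problem remains that you proved Theorem \ref{th:cyl}, not Theorem \ref{Thm:SubharmonicityCyl}, and used the latter as an input.
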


\section{Proofs of Main Results}

We start this section with a proof of our paper's first main result.
\begin{proof}[Proof of Theorem \ref{Th:shell}] The proof is long, and so we break it down into several steps.

{\bf Step 1: Maximum Principle.} First suppose that $f\in C_c^{\infty}(A)$ is a test function. In particular, $f$ is Lipschitz continuous on $\mathbb{R}^n$. Then $f^{\#}$ is also Lipschitz continuous on $\mathbb{R}^n$; see Section 7.5 of \cite{Baernstein Symmetrization in Analysis} or Lemma 6.7 in \cite{Sarvas}.  By standard regularity results, the solutions $u,v$ belong to $C^2(\overline{A})$. See Theorem 6.31 in \cite{GilbargTrudinger} and the discussion following its proof on Fredholm alternatives or Theorem 3.2 of \cite{Olga}. See also \cite{Nardi} for an explicit discussion of the Neumann problem.

Put
\[
w(r,\theta)=u^{\bigstar}(r,\theta)-Jv(r,\theta)-\varepsilon Q(r,\theta), 
\]
for $(r,\theta)\in A^{\bigstar}$, where
\[
Q(r,\theta)=(r-a)(r-b)+C\theta(\pi-\theta).
\]
A direct computation shows
\begin{equation*}
\Delta^{\bigstar}Q(r,\theta)=2+\frac{n-1}{r}(2r-a-b)-\frac{C}{r^{2}}\left(2+(n-2)(\pi-2\theta)\cot\theta\right).
\end{equation*}
Since $(\pi-2\theta)\cot\theta\geq 0$ on $(0,\pi)$, we may choose $C>0$ sufficiently large to guarantee $\Delta^{\bigstar}Q\leq 0$ on $A^{\bigstar}$. We then compute in the weak sense
\begin{align*}
-\Delta^{\bigstar} w &= -\Delta^{\bigstar}u^{\bigstar} +\Delta^{\bigstar}Jv + \varepsilon \Delta^{\bigstar}Q\\
&\leq f^{\bigstar} + J\Delta v + \varepsilon \Delta^{\bigstar}Q\\
&=f^{\bigstar} - f^{\bigstar} + \varepsilon \Delta^{\bigstar}Q\\
&\leq 0,
\end{align*}
where the first inequality follows from Theorems \ref{thm:J comm relation n dim} and \ref{thm:Subharmonicity n dim cap}, the second equality follows from \eqref{eq:ustareq}, and the second inequality follows from our choice of $C$. The Maximum Principle for weakly $\Delta^{\bigstar}$-subharmonic functions (see Theorem 1 of \cite{Littman}, for example) implies
\begin{equation}\label{eq:maxprinc}
\sup_{A^{\bigstar}}w \leq \max_{\partial A^{\bigstar}}w.
\end{equation}
If $\displaystyle \max_{\partial A^{\bigstar}}w \leq 0$, then \eqref{eq:maxprinc} implies
\[
\sup_{A^{\bigstar}}(u^{\bigstar}-Jv)\leq \varepsilon \|Q\|_{L^{\infty}(A^{\bigstar})},
\]
and sending $\varepsilon \to 0^+$ gives $u^{\bigstar}\leq Jv$ in $A^{\bigstar}$. In Steps 2 and 3 below, we therefore assume $\displaystyle \max_{\partial A^{\bigstar}}w > 0$.

{\bf Step 2: Boundary Analysis on Vertical Sides of $\partial A^{\bigstar}$}. We first claim that $\displaystyle \max_{\partial A^{\bigstar}}w$ is not achieved at a point where $r=a$ or $r=b$, i.e. on a vertical side of $\partial A^{\bigstar}$. For $\theta \in [0,\pi]$ and $h>0$ small, let $E(a+h,\theta) \subseteq \mathbb{S}^{n-1}$ denote a measurable subset with $\sigma(E(a+h,\theta))=\sigma (K(\theta))$ for which the $\sup$ defining $u^{\bigstar}(a+h,\theta)$ is achieved. We estimate
\begin{align*}
& \frac{e^{-\alpha(a+h)}w(a+h,\theta)-e^{-\alpha a}w(a,\theta)}{h} \\
&\qquad\qquad\qquad \qquad \qquad=e^{-\alpha(a+h)}\int_{E(a+h,\theta)}\frac{u((a+h)\xi)}{h}\,d\sigma(\xi)- e^{-\alpha a}\int_{E(a,\theta)}\frac{u(a\xi)}{h}\,d\sigma(\xi)\\
&\qquad\qquad\qquad \qquad \qquad \qquad -e^{-\alpha(a+h)}\int_{K(\theta)}\frac{v((a+h)\xi)}{h}\,d\sigma(\xi) +e^{-\alpha a}\int_{K(\theta)}\frac{v(a\xi)}{h}\,d\sigma(\xi) \\
&\qquad\qquad\qquad \qquad \qquad\qquad -\varepsilon \frac{e^{-\alpha(a+h)}Q(a+h,\theta)-e^{-\alpha a}Q(a,\theta)}{h}\\
&\qquad\qquad\qquad \qquad \qquad\geq \int_{E(a,\theta)}\frac{e^{-\alpha(a+h)}u((a+h)\xi)-e^{-\alpha a}u(a\xi)}{h}\,d\sigma(\xi)\\
&\qquad\qquad\qquad \qquad \qquad\qquad -\int_{K(\theta)}\frac{e^{-\alpha(a+h)}v((a+h)\xi)-e^{-\alpha a}v(a\xi)}{h}\,d\sigma(\xi)\\
&\qquad\qquad \qquad \qquad\qquad \qquad -\varepsilon \frac{e^{-\alpha(a+h)}Q(a+h,\theta)-e^{-\alpha a}Q(a,\theta)}{h}.\\.
\end{align*}
From the Dominated Convergence Theorem we deduce
\begin{align*}
\liminf_{h\to 0^+}\frac{e^{-\alpha(a+h)}w(a+h,\theta)-e^{-\alpha a}w(a,\theta)}{h} &\geq -e^{-\alpha a}\int_{E(a,\theta)}(-u_r(a\xi)+\alpha u(a\xi))\,d\sigma(\xi)\\
&\qquad +e^{-\alpha a}\int_{K(\theta)}(-v_r(a\xi)+\alpha v(a\xi))\,d\sigma(\xi)\\
&\qquad +\varepsilon\left(e^{-\alpha a}(b-a)+\alpha e^{-\alpha a}C\theta(\pi-\theta)\right)\\
&=\varepsilon\left(e^{-\alpha a}(b-a)+\alpha e^{-\alpha a}C\theta(\pi-\theta)\right),
\end{align*}
where the equality holds courtesy of the Robin boundary conditions. If $\displaystyle \max_{\partial A^{\bigstar}}w$ is achieved at $(a,\theta)$, it follows that for all $h>0$ sufficiently small
\begin{equation}\label{eq:wleftvert}
w(a+h,\theta)>e^{\alpha  h}w(a,\theta)+\frac{e^{\alpha h}h\varepsilon(b-a)}{2}>w(a,\theta),
\end{equation}
where the last inequality uses $w(a,\theta)>0$ and $\alpha \geq 0$. When $\theta\in (0,\pi)$, \eqref{eq:wleftvert} says that $\displaystyle \max_{\partial A^{\bigstar}}w$ is not achieved at a point with $r=a$, since otherwise the Maximum Principle \eqref{eq:maxprinc} would be violated. Likewise, when $\theta=0$ or $\theta=\pi$, $\displaystyle \max_{\partial A^{\bigstar}}w$ is not achieved at a point with $r=a$ since according to \eqref{eq:wleftvert}, $w$ assumes strictly larger values at other points $(r,\theta)\in\partial A^{\bigstar}$ with $r\in(a,b)$.

We next do a similar analysis of the behavior of $w$ at points along the boundary $\partial A^{\bigstar}$ with $r=b$. To this end, say $\theta \in [0,\pi]$ and $h>0$ is small. We estimate 
\begin{align*}
&\frac{e^{\beta (b-h)}w(b-h,\theta)-e^{\beta  b}w(b,\theta)}{h}\\
&\qquad\qquad\qquad \qquad \qquad= e^{\beta (b-h)}\int_{E(b-h,\theta)}\frac{u((b-h)\xi)}{h}\,d\sigma(\xi) - e^{\beta b}\int_{E(b,\theta)}\frac{u(b\xi)}{h}\,d\sigma(\xi)\\
&\qquad\qquad\qquad \qquad \qquad\quad -e^{\beta (b-h)}\int_{K(\theta)}\frac{v((b-h)\xi)}{h}\,d\sigma(\xi)+e^{\beta b}\int_{K(\theta)}\frac{v(b\xi)}{h}\,d\sigma(\xi)\\
&\qquad\qquad\qquad \qquad \qquad\quad -\varepsilon \frac{e^{\beta (b-h)}Q(b-h,\theta)-e^{\beta b}Q(b,\theta)}{h}\\
&\qquad\qquad\qquad \qquad \qquad\geq \int_{E(b,\theta)}\frac{e^{\beta (b-h)}u((b-h)\xi)-e^{\beta b}u(b\xi)}{h}\,d\sigma(\xi)\\
&\qquad\qquad\qquad \qquad \qquad\qquad -\int_{K(\theta)}\frac{e^{\beta (b-h)}v((b-h)\xi)-e^{\beta b}v(b\xi)}{h}\,d\sigma(\xi)\\
&\qquad\qquad\qquad \qquad \qquad\qquad -\varepsilon \frac{e^{\beta (b-h)}Q(b-h,\theta)-e^{\beta b}Q(b,\theta)}{h}.
\end{align*}
As before, we use the Robin boundary conditions to deduce
\begin{align*}
\liminf_{h\to 0^+}\frac{e^{\beta (b-h)}w(b-h,\theta)-e^{\beta  b}w(b,\theta)}{h} &\geq -e^{\beta b}\int_{E(b,\theta)}(u_r(b\xi)+\beta u(b\xi))\,d\sigma(\xi)\\
&\qquad +e^{\beta b}\int_{K(\theta)}(v_r(b\xi)+\beta v(b\xi))\,d\sigma(\xi)\\
&\qquad +\varepsilon\left(e^{\beta b}(b-a)+\beta e^{\beta b}C\theta(\pi-\theta)\right)\\
&=\varepsilon\left(e^{\beta b}(b-a)+\beta e^{\beta b}C\theta(\pi-\theta)\right).
\end{align*}
If $\displaystyle \max_{\partial A^{\bigstar}}w$ is achieved at $(b,\theta)$, then for all $h>0$ sufficiently small, we have 
\[
w(b-h,\theta)>e^{\beta  h}w(b,\theta)+\frac{e^{\beta h}h\varepsilon(b-a)}{2}>w(b,\theta),
\]
again as $w(b,\theta)>0$ and $\beta \geq 0$. By the same argument as before, the string of strict inequalities immediately above shows that $\displaystyle \max_{\partial A^{\bigstar}}w$ cannot be achieved at a point with $r=b$.

{\bf Step 3: Boundary Analysis on Horizontal Sides of $\partial A^{\bigstar}$.} We start by analyzing the behavior of $u^{\bigstar}(r,\pi)-Jv(r,\pi)$ in $r$. Define
\[
\psi(r)=u^{\bigstar}(r,\pi)-Jv(r,\pi)=\int_{\mathbb{S}^{n-1}}(u(r\xi)-v(r\xi))\,d\sigma(\xi), \quad r\in[a,b].
\]
For $a\leq r_1<r_2\leq b$, note that
\begin{align*}
r_2^{n-1}\psi'(r_2)-r_1^{n-1}\psi'(r_1)&= r_2^{n-1}\int_{\mathbb{S}^{n-1}}\left(u_r(r_2\xi)-v_r(r_2\xi)\right)\,d\sigma(\xi)\\
&\qquad -r_1^{n-1}\int_{\mathbb{S}^{n-1}}\left(u_r(r_1\xi)-v_r(r_1\xi)\right)\,d\sigma(\xi)\\
&=\int_{\partial A(r_1,r_2)}\left(\frac{\partial u}{\partial \nu}-\frac{\partial v}{\partial \nu}\right)\,dS\\
&=\int_{A(r_1,r_2)}\left(\Delta u - \Delta v \right)\,dx\\
&=\int_{A(r_1,r_2)}\left(-f+f^{\#}\right)\,dx\\
&=0,
\end{align*}
since $f$ and $f^{\#}$ are rearrangements of each other on each sphere $\{|x|=r\}$. It follows that
\begin{equation}\label{eq:k1}
r^{n-1}\int_{\mathbb{S}^{n-1}}(u_r(r\xi)-v_r(r\xi)\,d\sigma(\xi)=k_1
\end{equation}
is constant on $[a,b]$. Integrating this equation shows that there exists a constant $k_2$ where
\begin{equation}\label{eq:k2}
\int_{\mathbb{S}^{n-1}}(u(r\xi)-v(r\xi))\,d\sigma(\xi)=k_1\int_a^rt^{1-n}\,dt+k_2
\end{equation}
on the interval $[a,b]$. Using the Robin boundary conditions along $\{|x|=a\}$ and $\{|x|=b\}$ leads to the system of equations
\begin{align}
0&=\int_{\{|x|=a\}}\left(\left(\frac{\partial u}{\partial \nu}+\alpha u\right)-\left(\frac{\partial v}{\partial \nu}+\alpha v\right)\right)\,dS=-k_1+\alpha a^{n-1}k_2, \label{eq:firstk1}\\
0&=\int_{\{|x|=b\}}\left(\left(\frac{\partial u}{\partial \nu}+\beta u\right)-\left(\frac{\partial v}{\partial \nu}+\beta v\right)\right)\,dS=k_1+\beta b^{n-1}\left(k_1\int_a^bt^{1-n}\,dt+k_2\right).\nonumber
\end{align}
Combining these equations shows that
\[
0=k_2\left(\alpha a^{n-1}+\beta b^{n-1}+\alpha a^{n-1}\beta b^{n-1}\int_a^bt^{1-n}\,dt\right)
\]
If at least one of $\alpha,\beta$ is nonzero, the equation above implies that $k_2=0$ and \eqref{eq:firstk1} implies that $k_1=0$. Thus, \eqref{eq:k2} becomes
\begin{equation}\label{eq:uvsame}
\int_{\mathbb{S}^{n-1}}\left(u(r\xi)-v(r\xi)\right)\,d\sigma(\xi)=0,\quad r\in [a,b].
\end{equation}
If $\alpha =\beta =0$, choosing either $r=a$ or $r=b$ in \eqref{eq:k1} shows that $k_1=0$. Integrating \eqref{eq:k2} and using the normalization assumptions on $u,v$ gives
\[
0=\int_{A}(u-v)\,dx=k_2\int_a^br^{n-1}\,dr,
\]
so $k_2=0$ and equation \eqref{eq:uvsame} holds in this case as well. Since $u^{\bigstar}-Jv=0$ when $\theta=0$, we therefore deduce
\[
w(r,0)=w(r,\pi)=-\varepsilon(r-a)(r-b).
\]
Using \eqref{eq:maxprinc}, our work in Step 2 implies
\[
\sup_{A^{\bigstar}}(u^{\bigstar}-Jv)\leq \max_{r\in [a,b]}(-\varepsilon(r-a)(r-b))+ \varepsilon\|Q\|_{L^{\infty}(A^{\bigstar})}=\frac{\varepsilon}{4}(b-a)^2+\varepsilon \|Q\|_{L^{\infty}(A^{\bigstar})}.
\]
Sending $\varepsilon \to 0^+$ gives $u^{\bigstar}\leq Jv$ in $A^{\bigstar}$.

{\bf Step 4: Approximation for the General Result.} Now say $f\in L^2(A)$ is a general function. Let  $u,v$ be as in the statement of Theorem \ref{Th:shell}. Choose a sequence of test functions $f_k\in C^{\infty}_c(A)$ where $f_k\to f$ in $L^2(A)$. If $\alpha =\beta =0$, we assume $f,f_k$ have zero mean. Since cap symmetrization is a contraction in the $L^2$-distance (see Theorem 3 or Corollary 1 of \cite{Baernstein Cortona Volume}, for example), we also have $f_k^{\#} \to f^{\#}$ in $L^2(A)$. Let $u_k$ and $v_k$ be solutions to
\[
\begin{array}{rclccccrclcc}
-\Delta u_k & = & f_k & \text{in} & A, &  &  & -\Delta v_k & = & f_k^{\#} & \text{in} & A,\\
\frac{\partial u_k}{\partial \nu} +\alpha  u_k & = & 0 & \text{on} & \{|x|=a\}, &  &  & \frac{\partial v_k}{\partial \nu} + \alpha  v_k & = & 0 & \text{on} & \{|x|=a\},\\
\frac{\partial u_k}{\partial \nu} +\beta  u_k & = & 0 & \text{on} & \{|x|=b\}, &  &  & \frac{\partial v_k}{\partial \nu} + \beta  v_k & = & 0 & \text{on} & \{|x|=b\}.
\end{array}
\]
If $\alpha =\beta =0$, we also assume $u_k,v_k$ have zero mean. By our work in Steps 1-3, for each $k$,
\begin{equation}\label{ineq:uvkstar}
u_k^{\bigstar}(r,\theta)\leq Jv_k(r,\theta)
\end{equation}
for $(r,\theta)\in A^{\bigstar}$ and
\begin{equation}\label{eq:ukvksamemean}
\int_{\mathbb{S}^{n-1}}(u_k(r\xi)-v_k(r\xi))\,d\sigma(\xi)=0
\end{equation}
for $r\in (a,b)$. If either $\alpha $ or $\beta $ is nonzero, then \eqref{ineq:contdata} implies $u_k\to u$ in $L^2(A)$. If $\alpha =\beta =0$, we still have convergence in $L^2(A)$; see Corollary 1.29 of \cite{LangfordThesis}, for instance. In particular, we have convergence in $L^1(A)$.  Spelled out, we have
\[
\lim_{k\to +\infty}\int_{a}^{b}\int_{\mathbb{S}^{n-1}}\big|u_{k}(r\xi)-u(r\xi)\big|\,d\sigma(\xi)\ r^{n-1}\,dr=0.
\]
Thus, by passing to a subsequence, we may assume
\begin{equation}\label{eq:uktoulim}
\lim_{k\to +\infty}\int_{\mathbb{S}^{n-1}}\big|u_{k}(r\xi)-u(r\xi)\big|\,d\sigma(\xi)=0
\end{equation}
for almost every $r\in (a,b)$. Since cap symmetrization is a contraction in the $L^1$-distance (again, see Theorem 3 of \cite{Baernstein Cortona Volume}), this last equality implies that for almost every $r\in(a,b)$,
\begin{equation}\label{eq:limuk}
\lim_{k\to +\infty}u_k^{\bigstar}(r,\theta)=u^{\bigstar}(r,\theta)
\end{equation}
for each $\theta \in (0,\pi)$. By an identical argument, we may pass to another subsequence and assume that for almost every $r\in (a,b)$
\begin{equation}\label{eq:vktoulim}
\lim_{k\to +\infty}\int_{\mathbb{S}^{n-1}}\big|v_{k}(r\xi)-v(r\xi)\big|\,d\sigma(\xi)=0
\end{equation}
and
\begin{equation}\label{eq:limvk}
\lim_{k\to +\infty}Jv_k(r,\theta)=Jv(r,\theta)
\end{equation}
for each $\theta\in(0,\pi)$. Sending $k\to +\infty$ in \eqref{eq:ukvksamemean} and using \eqref{eq:uktoulim} and \eqref{eq:vktoulim} shows that
\begin{equation}\label{eq:uvsamemeanstep4}
\int_{\mathbb{S}^{n-1}}\left(u(r\xi)-v(r\xi)\right)\,d\sigma(\xi)=0
\end{equation}
for almost every $r\in(a,b)$. Finally, sending $k\to +\infty$ in \eqref{ineq:uvkstar} and using \eqref{eq:limuk} and \eqref{eq:limvk} gives the theorem's first conclusion. The theorem's conclusion about convex means follows from   \eqref{eq:uvsamemeanstep4} and Proposition \ref{Prop:Majorization}.

\end{proof}

As a consequence of Theorem \ref{Th:shell}, we see that the solution to the symmetrized problem is itself symmetrized.
\begin{corollary}\label{cor:vsymmshell}
The solution $v$ in Theorem \ref{Th:shell} satisfies $v=v^{\#}$ a.e.
\end{corollary}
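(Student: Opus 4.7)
The plan is to apply Theorem \ref{Th:shell} itself, but with the input $f$ replaced by its cap symmetrization $f^{\#}$. Since cap symmetrization is idempotent ($(f^{\#})^{\#}=f^{\#}$), both Poisson problems in the theorem then carry identical data and identical boundary conditions; when $\alpha_1=\alpha_2=0$, the zero-mean hypothesis is preserved because $\int_A f^{\#}\,dx=\int_A f\,dx=0$, and we normalize both solutions to have zero mean. The uniqueness theory of Section 2.1 (for the Robin problem, or for the zero-mean Neumann problem) then forces the two solutions to coincide; both equal $v$. The conclusion of the theorem specializes to
$$v^{\bigstar}(r,\theta)\leq Jv(r,\theta)\qquad\text{for a.e.}~r\in(a,b)~\text{and every}~\theta\in(0,\pi).$$

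The opposite inequality $Jv(r,\theta)\leq v^{\bigstar}(r,\theta)$ is immediate from Definition \ref{Def: Star Functions in Shells}, since $v^{\bigstar}$ is a supremum over all admissible subsets $E\subseteq\mathbb{S}^{n-1}$ of the prescribed surface measure and the polar cap $K(\theta)$ is one such set. Combining the two gives $Jv(r,\theta)=v^{\bigstar}(r,\theta)$; that is, for a.e. $r\in(a,b)$ and every $\theta\in(0,\pi)$, the cap $K(\theta)$ itself attains the supremum defining $v^{\bigstar}(r,\theta)$.

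The one non-trivial remaining step, which I expect to be the main obstacle, is to promote this maximizer condition to the pointwise identity $v(r\,\cdot)=(v(r\,\cdot))^{\#}$ a.e. on almost every sphere. Fixing $r$ and writing $g=v(r\,\cdot)$, I would invoke the standard characterization that the maximizers of $E\mapsto\int_E g\,d\sigma$ over sets of measure $t$ are precisely the sets $E$ with $\{g>g^{\ast}(t)\}\subseteq E\subseteq\{g\geq g^{\ast}(t)\}$ modulo null sets. For each level $c$ at which the distribution function of $g$ is continuous (true for all but countably many $c$), choose $\theta$ with $\sigma(K(\theta))=\sigma(\{g>c\})$; then $g^{\ast}(\sigma(K(\theta)))=c$ and $\sigma(\{g=c\})=0$, so both $\{g>c\}$ and $K(\theta)$ are maximizers of the same measure and must agree up to a null set. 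Thus the super-level sets of $g$ coincide a.e. with polar caps for a.e. level, which is exactly the condition that $g$ and $g^{\#}$ share almost all super-level sets. A layer-cake integration then yields $g=g^{\#}$ a.e. on the sphere, and since this holds for a.e. $r\in(a,b)$, we conclude $v=v^{\#}$ a.e. on $A$.
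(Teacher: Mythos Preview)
Your argument is correct, and the first two steps (self-application of Theorem~\ref{Th:shell} to the data $f^{\#}$ to obtain $v^{\bigstar}=Jv$) match the paper exactly. The route you take from $v^{\bigstar}=Jv$ to $v=v^{\#}$, however, is genuinely different. The paper first restricts to smooth $f$ so that $v\in C^2(\overline A)$, differentiates the identity $\int_{K(\theta)}v^{\#}(r\xi)\,d\sigma=\int_{K(\theta)}v(r\xi)\,d\sigma$ in $\theta$, and then argues by contradiction (separately in dimension $n=2$ and $n\geq 3$) that $v$ must be constant on each latitude; the general case is recovered by $L^2$ approximation and the contraction property of cap symmetrization. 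Your approach instead invokes the bathtub-type characterization of maximizing sets together with a layer-cake argument, which works directly at the level of $L^1$ slice functions and requires neither the smooth reduction, the dimensional case split, nor the approximation step. The paper's argument is more hands-on and self-contained (its contradiction step essentially reproves a special case of the maximizer characterization), while yours is shorter and more conceptual once one is willing to cite the standard maximizer description; both are valid.
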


\begin{proof}
First suppose that $f\in C_c^{\infty}(A)$ is a test function. Then as in the proof of Theorem \ref{Th:shell}, $f^{\#}$ is Lipschitz continuous and $v\in C^2(\overline{A})$. Letting $f^{\#}$ play the role of $f$, Theorem \ref{Th:shell} gives that $v^{\bigstar}(r,\theta)\leq Jv(r,\theta)$ on $A^{\bigstar}$. The reverse inequality holds by definition of the star function, and so $v^{\bigstar}=Jv$ on $A^{\bigstar}$. When the dimension $n=2$, we use complex notation to write this equality as
\[
\int_{-\theta}^{\theta}v(re^{i\phi})\,d\phi=\int_{-\theta}^{\theta}v^{\#}(re^{i\phi})\,d\phi,
\]
for each $r\in (a,b)$ and $\theta \in (0,\pi)$. Differentiating this equation with respect to $\theta$ gives
\begin{equation}\label{eq:2dvsymm}
v(re^{i\theta})+v(re^{-i\theta})=2v^{\#}(re^{i\theta}).
\end{equation}
We claim that $v(re^{i\theta})=v(re^{-i\theta})$ for $\theta \in (0,\pi)$. If this were not the case, then say $v(re^{i\theta})>v(re^{-i\theta})$ for some $\theta$. We can then find $\varepsilon>0$ sufficiently small so that
\[
\int_{-\theta}^{\theta}v(re^{i\phi})\,d\phi<\int_{-\theta+\varepsilon}^{\theta+\varepsilon}v(re^{i\phi})\,d\phi,
\]
and this contradicts the fact that $v^{\bigstar}(r,\theta)=\int_{-\theta}^{\theta}v(re^{i\phi})\,d\phi$. Having verified the claim, \eqref{eq:2dvsymm} gives $v(re^{i\theta})=v^{\#}(re^{i\theta})$.

If $n\geq 3$, we write out the equation $v^{\bigstar}=Jv$ on $A^{\bigstar}$ using spherical coordinates. We have
\[
\int_0^\theta \int_{\mathbb{S}^{n-2}}v(r\cos t, ry\sin t)\,d\sigma(y)\sin^{n-2} t\,dt=\int_0^\theta \int_{\mathbb{S}^{n-2}}v^{\#}(r\cos t, ry\sin t)\,d\sigma(y)\sin^{n-2} t\,dt
\]
for each $r\in (a,b)$ and $\theta \in (0,\pi)$. Differentiating with respect to $\theta$ and dividing by $\sin^{n-2}\theta$ gives
\begin{equation}\label{eq:vsymm3higher}
\int_{\mathbb{S}^{n-2}}v(r\cos \theta, ry\sin \theta)\,d\sigma(y)=\int_{\mathbb{S}^{n-2}}v^{\#}(r\cos \theta, ry\sin \theta)\,d\sigma(y)
\end{equation}
for each $r\in (a,b)$ and $\theta \in (0,\pi)$. Note that $v^{\#}(r\cos \theta, ry\sin \theta)$ is constant in $y\in \mathbb{S}^{n-2}$. We claim that $v(r\cos \theta, ry\sin \theta)$ is also constant for $y\in \mathbb{S}^{n-2}$. Say this is not the case, and $v(r\cos \theta, ry_1\sin \theta)>v(r\cos \theta, ry_2\sin \theta)$ for $y_1,y_2\in \mathbb{S}^{n-2}$. Then we can find positive numbers $\varepsilon_1,\varepsilon_2>0$ where
\begin{equation}\label{ineq:B1B2v}
\inf_{z_1\in B_1} v(rz_1)>\sup_{z_2\in B_2}v(rz_2);
\end{equation}
here $B_1= B((\cos \theta, y_1\sin \theta),\varepsilon_1)$ and $B_2= B((\cos \theta, y_2\sin \theta),\varepsilon_2)$, where $B((\cos \theta, y_1\sin \theta),\varepsilon_1)$ denotes the spherical cap (or geodesic ball) on $\mathbb{S}^{n-1}$ centered at $(\cos \theta, y_1\sin \theta)$ with radius $\varepsilon_1$, and similarly for $B_2$. We additionally assume
\[
\sigma(B_1 \setminus K(\theta))=\sigma(B_2 \cap K(\theta)).
\]

Write
\[
K_1=(K(\theta)\setminus B_2))\cup (B_1\setminus K(\theta)).
\]
Then $\sigma(K_1)=\sigma(K(\theta))$, and using \eqref{ineq:B1B2v}, we have
\begin{align*}
\int_{K_1}v(r\xi)\,d\sigma(\xi)&=\int_{K(\theta)\setminus B_2}v(r\xi)\,d\sigma(\xi)+\int_{B_1\setminus K(\theta)}v(r\xi)\,d\sigma(\xi)\\
&>\int_{K(\theta)\setminus B_2}v(r\xi)\,d\sigma(\xi)+\int_{B_2\cap K(\theta)}v(r\xi)\,d\sigma(\xi)\\
&=\int_{K(\theta)}v(r\xi)\,d\sigma(\xi),
\end{align*}
which contradicts $v^{\bigstar}=Jv$. We conclude that $v(r\cos \theta, ry\sin \theta)$ is constant for $y\in \mathbb{S}^{n-2}$. Equation \eqref{eq:vsymm3higher} implies
\[
v(r\cos \theta, ry\sin \theta)=v^{\#}(r\cos \theta, ry\sin \theta)
\]
for each $y\in \mathbb{S}^{n-2}$, $r\in (a,b)$, and $\theta \in (0,\pi)$. That is, $v=v^{\#}$.

If $f\in L^2(A)$ is a general function, pick a sequence of test functions $f_k\in C_c^{\infty}(A)$ where $f_k\to f$ in $L^2(A)$. If $\alpha =\beta =0$, we assume each $f_k$ has zero mean. As cap symmetrization is a contraction in the $L^2$-distance, $f_k^{\#} \to f^{\#}$ in $L^2(A)$. As before, each $f_k^{\#}$ is Lipschitz continuous on $\overline{A}$. Let $v_k$ solve
\[
\begin{array}{rclcc}
-\Delta v_k & = & f_k^{\#} & \text{in} & A,\\
\frac{\partial v_k}{\partial \nu} + \alpha  v_k & = & 0 & \text{on} & \{|x|=a\},\\
\frac{\partial v_k}{\partial \nu} + \beta  v_k & = & 0 & \text{on} & \{|x|=b\}.
\end{array}
\]
If $\alpha =\beta =0$, we assume each $v_k$ has zero mean. As in the proof of Theorem \ref{Th:shell}, $v_k\to v$ in $L^2(A)$. By the work above, $v_k=v_k^{\#}$ for each $k$. Again, cap symmetrization is a contraction in the $L^2$-distance, and so $v_k^{\#}\to v^{\#}$ in $L^2(A)$. We deduce that $v=v^{\#}$ a.e., as desired.
\end{proof}

Another, more immediate consequence of Theorem \ref{Th:shell} is that cap symmetrizing the data increases $L^p$-norms.

\begin{corollary}\label{cor:Lpnorms}
Let $u$ and $v$ be as in Theorem \ref{Th:shell}. Then for almost every $r\in(a,b)$, the radial slice functions $u(r\ \! \cdot)$ and $v(r\ \! \cdot)$ satisfy
\begin{align*}
\|u(r\ \! \cdot)\|_{L^{p}(\mathbb{S}^{n-1})}  & \leq  \|v(r\ \!\cdot)\|_{L^{p}(\mathbb{S}^{n-1})},\quad1\leq p\leq +\infty,\\
\underset{\mathbb{S}^{n-1}}{\esssup}\ u(r\ \!\cdot) & \leq  \underset{\mathbb{S}^{n-1}}{\esssup}\ v(r\ \!\cdot),\\
\underset{\mathbb{S}^{n-1}}{\essinf}\ u(r\ \!\cdot) & \geq  \underset{\mathbb{S}^{n-1}}{\essinf}\ v(r\ \!\cdot),\\
\underset{\mathbb{S}^{n-1}}{\osc}\ u(r\ \!\cdot) & \leq  \underset{\mathbb{S}^{n-1}}{\osc}\ v(r\ \!\cdot),
\end{align*}
where $\osc=\esssup - \essinf$. Consequently,
\begin{align*}
\|u\|_{L^{p}(A)}  &\leq  \|v\|_{L^{p}(A)},\quad1\leq p\leq +\infty,\\
\underset{A}{\esssup}\ u & \leq  \underset{A}{\esssup}\ v,\\
\underset{A}{\essinf}\ u & \geq  \underset{A}{\essinf}\ v,\\
\underset{A}{\osc}\ u & \leq  \underset{A}{\osc}\ v.
\end{align*}
\end{corollary}

\begin{proof}
The conclusions about slice functions follow from Theorem \ref{Th:shell}, equation \eqref{eq:uvsamemeanstep4}, and Corollary \ref{cor:Lpnormsgen}; the remaining conclusions clearly follow.
\end{proof}

As remarked in the Introduction, letting the Robin parameter $\alpha$ or $\beta$ tend to $+\infty$, Theorem \ref{Th:shell} yields comparison principles for Poisson problems with mixed Robin/Dirichlet boundary conditions. We now make this precise. With $A$ as in Theorem \ref{Th:shell}, denote
\[
H^1_a(A)=\left\{u\in H^1(A):u=0\textup{ on }\{|x|=a\}\right\},
\]
where the boundary condition is understood in the trace sense. Given $f\in L^2(A)$ and $\beta\geq 0$, a function $u\in H_a^1(A)$ is said to be a weak solution to the mixed Poisson problem
\begin{eqnarray*}
-\Delta u & = & f\quad\text{in}\quad A,\\
u& = & 0\quad\text{on}\quad \{|x|=a\},\\
\frac{\partial u}{\partial \nu} +\beta u& = & 0\quad\text{on}\quad \{|x|=b\},
\end{eqnarray*}
provided
\[
\int_{A}\nabla u \cdot \nabla v\,dx+\beta\int_{\{|x|=b\}}uv\,dS=\int_{A}fv\,dx
\]
for all $v\in H_a^1(A)$. As in Section \ref{Sub:robinfund}, the above mixed problem admits a unique weak solution for each such $f$ and $\beta$. Here now is our mixed comparison principle in shells.

\begin{theorem}[Mixed Robin/Dirichlet Comparison Principle on Shells]\label{Th:shellmixed}
Say $0<a<b<+\infty$ and $A=A(a,b)\subseteq \mathbb{R}^n$ is a spherical shell with inner radius $a$ and outer radius $b$. Let $\beta \geq 0$ and $f\in L^2(A)$. Say $u$ and $v$ solve the mixed Poisson problems
\[
\begin{array}{rclccccrclcc}
-\Delta u & = & f & \text{in} & A, &  &  & -\Delta v & = & f^{\#} & \text{in} & A,\\
u & = & 0 & \text{on} & \{|x|=a\}, &  &  & v & = & 0 & \text{on} & \{|x|=a\},\\
\frac{\partial u}{\partial \nu} +\beta  u & = & 0 & \text{on} & \{|x|=b\}, &  &  & \frac{\partial v}{\partial \nu} + \beta  v & = & 0 & \text{on} & \{|x|=b\},
\end{array}
\]
where $f^{\#}$ denotes the cap symmetrization of $f$. Then for almost every $r\in(a,b)$, the inequality
\[
u^{\bigstar}(r,\theta)\leq Jv(r,\theta)
\]
holds for each $\theta \in (0,\pi)$. In particular, for almost every $r\in(a,b)$ and for each convex function $\phi:\mathbb{R}\to \mathbb{R}$, we have
\[
\int_{\{|x|=r\}}\phi(u)\,dS \leq \int_{\{|x|=r\}}\phi(v)\,dS.
\]
\end{theorem}

Our proof of Theorem \ref{Th:shellmixed} is essentially based on the observation that solutions to Poisson's equation depend continuously on the Robin parameters (in an appropriate sense), even as those parameters tend towards $+\infty$.

\begin{proof}[Proof of Theorem \ref{Th:shellmixed}]
Let $\alpha_k$ denote a strictly increasing positive sequence with $\alpha_k\to +\infty$. Let $u_k$ and $v_k$ denote solutions to the Poisson problems
\[
\begin{array}{rclccccrclcc}
-\Delta u_k & = & f & \text{in} & A, &  &  & -\Delta v & = & f^{\#} & \text{in} & A,\\
\frac{\partial u_k}{\partial \nu} +\alpha_k  u_k & = & 0 & \text{on} & \{|x|=a\}, &  &  &  \frac{\partial v_k}{\partial \nu} + \alpha_k  v_k & = & 0 & \text{on} & \{|x|=a\},\\
\frac{\partial u_k}{\partial \nu} +\beta  u_k & = & 0 & \text{on} & \{|x|=b\}, &  &  & \frac{\partial v_k}{\partial \nu} + \beta  v_k & = & 0 & \text{on} & \{|x|=b\}.
\end{array}
\]
By Theorem \ref{Th:shell} (and its proof), for almost every $r\in(a,b)$
\begin{equation}\label{eq:ukvksamemeanmixed}
\int_{\mathbb{S}^{n-1}}(u_k(r\xi)-v_k(r\xi))\,d\sigma(\xi)=0
\end{equation}
for each $k$ and
\begin{equation}\label{eq:ukvkmixed}
u_k^{\bigstar}(r,\theta)\leq Jv_k(r,\theta)
\end{equation}
for each $\theta \in (0,\pi)$. We claim that the functions $u_k$ are bounded in $H^1(A)$. To back our claim, first note that
\begin{align*}
&\int_{A}|\nabla u_k|^2\,dx+\alpha_k\int_{\{|x|=a\}}u_k^2\,dS+\beta\int_{\{|x|=b\}}u_k^2\,dS\\
&  \qquad \qquad  \qquad \qquad \geq \int_{A}|\nabla u_k|^2\,dx+\alpha_1\int_{\{|x|=a\}}u_k^2\,dS+\beta\int_{\{|x|=b\}}u_k^2\,dS\\
& \qquad \qquad   \qquad \qquad  \geq c_2\|u_k\|_{H^1(A)}^2,
\end{align*}
where $c_2$ denotes coercivity constant (see Section \ref{Sub:robinfund}) of the bilinear form associated to the Robin problem with Robin parameter $\alpha_1$ on the inner sphere $\{|x|=a\}$ and Robin parameter $\beta$ on the outer sphere $\{|x|=b\}$. As
\begin{equation}\label{eq:mixedlimit}
\int_{A}|\nabla u_k|^2\,dx+\alpha_k\int_{\{|x|=a\}}u_k^2\,dS+\beta\int_{\{|x|=b\}}u_k^2\,dS=\int_{A}fu_k\,dx,
\end{equation}
it follows that
\[
\|u_k\|_{H^1(A)}\leq \frac{1}{c_2}\|f\|_{L^2(A)}
\]
for every $k$. The $u_k$ are thus bounded in $H^1(A)$. By Banach-Alaoglu and Rellich-Kondrachov, we may pass to a subsequence and assume the existence of $u'\in H^1(A)$ where $u_k\to u'$ weakly in $H^1(A)$ and $u_k\to u'$ in $L^2(A)$. By p.134 of \cite{EvansGariepy},
\[
\int_{\partial A}w^2\,dS\leq C\left(\int_{A}|\nabla w||w|\,dx+\int_{A}w^2\,dx\right)
\]
for $w\in H^1(A)$ and for some universal constant $C$. Thus, $u_k\to u'$ in $L^2(\partial A)$. Dividing \eqref{eq:mixedlimit} by $\alpha_k$ and sending $k\to +\infty$ shows $\int_{\{|x|=a\}}(u')^2\,dS=0$, i.e $u'\in H^1_a(A)$.

If $v\in H_a^1(A)$, then for each $k$,
\begin{equation*}
\int_{A}\nabla u_k\cdot \nabla v\,dx+\beta\int_{\{|x|=b\}}u_kv\,dS=\int_{A}fv\,dx.
\end{equation*}
Thus, letting $k\to +\infty$ in the above equation and using weak convergence, we conclude
\[
\int_{A}\nabla u'\cdot \nabla v\,dx+\beta\int_{\{|x|=b\}}u'v\,dS=\int_{A}fv\,dx.
\]
By uniqueness, $u'=u$, and so $u_k\to u$ in $L^2(A)$. The argument in Step 4 in the proof of Theorem \ref{Th:shell} shows that for almost every $r\in(a,b)$,
\begin{equation}\label{eq:uksamemeanmixed}
\int_{\mathbb{S}^{n-1}}u_k(r\xi)\,d\sigma(\xi) \to \int_{\mathbb{S}^{n-1}}u(r\xi)\,d\sigma(\xi)
\end{equation}
and
\begin{equation}\label{eq:ukconvmided}
u^{\bigstar}_k(r,\theta)\to u^{\bigstar}(r,\theta)
\end{equation}
for each $\theta \in (0,\pi)$. An analogous argument shows that we may assume $v_k\to v$ in $L^2(A)$, and again, the proof of Theorem \ref{Th:shell} shows that for almost every $r\in(a,b)$,
\begin{equation}\label{eq:vksamemeanmixed}
\int_{\mathbb{S}^{n-1}}v_k(r\xi)\,d\sigma(\xi) \to \int_{\mathbb{S}^{n-1}}v(r\xi)\,d\sigma(\xi)
\end{equation}
and
\begin{equation}\label{eq:vkconvmided}
Jv_k(r,\theta)\to Jv(r,\theta)
\end{equation}
for each $\theta \in (0,\pi)$. Combining \eqref{eq:ukvkmixed}, \eqref{eq:ukconvmided}, and \eqref{eq:vkconvmided} gives the theorem's first conclusion. Moreover, pairing \eqref{eq:ukvksamemeanmixed} with \eqref{eq:uksamemeanmixed} and \eqref{eq:vksamemeanmixed} shows that for almost every $r\in(a,b)$,
\[
\int_{\mathbb{S}^{n-1}}(u(r\xi)-v(r\xi))\,d\sigma(\xi)=0.
\]
The equation above and Proposition \ref{Prop:Majorization} give the theorem's conclusion about convex means.
\end{proof}

Of course, letting $\beta \to +\infty$ in Theorem \ref{Th:shell} (rather than $\alpha \to +\infty$ as we have just done), one deduces an analogue of Theorem \ref{Th:shellmixed} with Robin boundary conditions on the inner sphere $\{|x|=a\}$ and Dirichlet boundary conditions on the outer sphere $\{|x|=b\}$. Similarly, letting both $\alpha,\beta \to +\infty$ in Theorem \ref{Th:shell}, one deduces a comparison principle with Dirichlet boundary conditions on both spheres $\{|x|=a\}$ and $\{|x|=b\}$. The conclusions of Corollary \ref{cor:Lpnorms} also hold for the solutions $u$ and $v$ of Theorem \ref{Th:shellmixed} (and also for the two additional results just mentioned).

We next prove our paper's second main result. But before we do so, we briefly recall how cap symmetrization is defined for functions on balls, as the definition has one technicality that does not arise in the shell setting. In our result, recall that we take $B=\{x\in\mathbb{R}^{n}:|x|<b\}$ with $0<b<+ \infty$. The cap symmetrization of a function $f\in L^{1}(B)$ is defined as follows. When $0<r<b$ and the slice function $f^{r}:\mathbb{S}^{n-1}\rightarrow\mathbb{R}$ belongs to $L^{1}(\mathbb{S}^{n-1})$,
we define
\[
f^{\#}(r\xi)=(f^{r})^{\#}(\xi),
\]
where $(f^{r})^{\#}$ denotes the spherical rearrangement of the slice
function $f^{r}$. If $f^{r}\notin L^{1}(\mathbb{S}^{n-1})$, we leave $f^{\#}$ undefined on the sphere $\{|x|=r\}$. If $f(0)$ is defined, we also define $f^{\#}(0)=f(0)$. When $f(0)$ is not defined, $f^{\#}(0)$
is likewise left undefined. Note that the definitions of $Jf$ and $f^{\bigstar}$ in this setting (as defined in the Introduction), agree with Definition \ref{Def: Star Functions in Shells} and equation \eqref{eq:Jucapshell} with $a=0$. Here now is our second main result.

\begin{proof}[Proof of Theorem \ref{Th:ball}.] First say $f\in C_c^{\infty}(B)$ is a test function. As before, $u,v\in C^2(\overline{B})$. Say $\delta,\varepsilon>0$ and let $A_{\delta}=A(\delta,b)$ denote a spherical shell with inner radius $\delta$ and outer radius $b$. Let $w$ be defined as in the proof of Theorem \ref{Th:shell}:
\[
w(r,\theta)=u^{\bigstar}(r,\theta)-Jv(r,\theta)-\varepsilon Q(r,\theta),
\]
where
\[
Q(r,\theta)=r(r-b)+C\theta(\pi-\theta),
\]
and again $C$ is chosen so that
\begin{equation*}
\Delta^{\bigstar}Q(r,\theta)=2+\frac{n-1}{r}(2r-b)-\frac{C}{r^{2}}\left(2+(n-2)(\pi-2\theta)\cot\theta\right)\leq 0
\end{equation*}
on $B^{\bigstar}$. As before, we have
\[
-\Delta ^{\bigstar}w\leq 0
\]
weakly in $A_{\delta}^{\bigstar}$. By the Maximum Principle,
\begin{equation}\label{ineq:MPball}
\sup_{A_{\delta}^{\bigstar}}w \leq \max_{\partial A_{\delta}^{\bigstar}}w,
\end{equation}
and letting $\delta \to 0^+$, we see
\begin{equation}\label{ineq:T0}
\sup_{B^{\bigstar}}w \leq \max_{\partial B^{\bigstar}}w.
\end{equation}
As before, we may assume $\displaystyle \max_{\partial B^{\bigstar}}w>0$.

Defining
\[
\psi(r)=\int_{\mathbb{S}^{n-1}}(u(r\xi)-v(r\xi))\,d\sigma(\xi),\qquad r\in[0,b],
\]
the proof of Theorem \ref{Th:shell} shows that $r^{n-1}\psi'(r)=k_1$ is constant on $(0,b]$. Letting $r\to 0^+$ and using $u,v\in C^2(\overline{B})$, we conclude $k_1=0$. Thus
\begin{equation}\label{eq:balluvmeanconstant}
\int_{\mathbb{S}^{n-1}}(u(r\xi)-v(r\xi))\,d\sigma(\xi)=k_2
\end{equation}
is constant on $[0,b]$. We therefore have
\begin{align*}
b^{n-1}\int_{\mathbb{S}^{n-1}}\left(u_r(b\xi)-v_r(b\xi)\right)\,d\sigma(\xi)&=0,\\
b^{n-1}\beta  \int_{\mathbb{S}^{n-1}}\left(u(b\xi)-v(b\xi)\right)\,d\sigma(\xi)&=b^{n-1}\beta  k_2.\\
\end{align*}
Adding these equations and using the Robin boundary conditions shows
\[
0=\int_{\{|x|=b\}}\left(\left(\frac{\partial u}{\partial \nu}+\beta  u\right)-\left(\frac{\partial v}{\partial \nu}+\beta  v\right)\right)\,dS=b^{n-1}\beta  k_2.
\]
If $\beta  \neq 0$, we conclude $k_2=0$, and so
\begin{equation}\label{eq:uvsamemeanball}
\int_{\mathbb{S}^{n-1}}(u(r\xi)-v(r\xi))\,d\sigma(\xi)=0,\qquad r\in[0,b].
\end{equation}
If $\beta =0$, we integrate \eqref{eq:balluvmeanconstant} and use our normalization assumptions on $u,v$:
\[
0=\int_{B}(u-v)\,dx=k_2\int_0^br^{n-1}\,dr,
\]
so $k_2=0$ and again \eqref{eq:uvsamemeanball} holds.

Step 2 in the proof of Theorem \ref{Th:shell} shows that $\displaystyle \max_{\partial B^{\bigstar}}w$ cannot be achieved at a point with $r=b$. Now $u^{\bigstar}-Jv=0$ by definition when $\theta=0$ and \eqref{eq:uvsamemeanball} implies that $u^{\bigstar}-Jv=0$ when $\theta=\pi$. Moreover, \eqref{eq:uvsamemeanball} implies $u(0)=v(0)$, and so $u^{\bigstar}-Jv=0$ when $r=0$. Thus, \eqref{ineq:T0} implies
\[
\sup_{B^{\bigstar}}(u^{\bigstar}-Jv) \leq \max_{\partial B^{\bigstar}}(-\varepsilon Q)+\varepsilon\|Q\|_{L^{\infty}(B^{\bigstar})}.
\]
Sending $\varepsilon \to 0^+$ shows $u^{\bigstar}-Jv\leq 0$ in $B^{\bigstar}$. The approximation argument from the proof of Theorem \ref{Th:shell} applies almost verbatim to our new setting, and shows that for a given function $f\in L^2(B)$, for almost every $r\in(0,b)$, the inequality $u^{\bigstar}(r,\theta)\leq Jv(r,\theta)$ holds for each $\theta \in (0,\pi)$. Equation \eqref{eq:uvsamemeanball} and the approximation argument in the proof of Theorem \ref{Th:shell} also shows that for general $f\in L^2(B)$, 
\begin{equation}\label{eq:samemeansball}
\int_{\mathbb{S}^{n-1}}(u(r\xi)-v(r\xi))\,d\sigma(\xi)=0
\end{equation}
for almost every $r\in(0,b)$. Thus, the theorem's conclusion about convex means follows from Proposition \ref{Prop:Majorization}. 
\end{proof}

The proof of Corollary \ref{cor:vsymmshell} similarly carries over verbatim to give the following result.

\begin{corollary}
The solution $v$ in Theorem \ref{Th:ball} satisfies $v=v^{\#}$ a.e.
\end{corollary}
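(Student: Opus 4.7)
The plan is to adapt the proof of Corollary \ref{cor:vsymmshell} to the ball setting. First I would specialize to the case of a smooth datum $f \in C_c^{\infty}(B)$, for which $f^{\#}$ is Lipschitz on $\mathbb{R}^n$ and the corresponding solution lies in $C^2(\overline{B})$ by standard elliptic regularity. The crucial observation is to apply Theorem \ref{Th:ball} with $f^{\#}$ itself playing the role of the given data; since $(f^{\#})^{\#}=f^{\#}$, the same function $v$ solves both the ``original'' and the ``symmetrized'' Robin problem in that theorem, and the conclusion yields $v^{\bigstar} \leq Jv$ on $B^{\bigstar}$. The reverse inequality is automatic from the definition of the star function, so $v^{\bigstar} = Jv$ throughout $B^{\bigstar}$.

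From this identity I would recover $v = v^{\#}$ by the same two-part argument used in Corollary \ref{cor:vsymmshell}. When $n=2$, I write the equality in complex notation and differentiate in $\theta$ to get $v(re^{i\theta})+v(re^{-i\theta})=2v^{\#}(re^{i\theta})$, then argue by contradiction: if $v(re^{i\theta}) \neq v(re^{-i\theta})$, a small rotation of the integration arc would produce a set with strictly larger integral than $K(\theta)$, contradicting the maximality used to define $v^{\bigstar}(r,\theta)$. When $n \geq 3$, I write $Jv$ and $v^{\bigstar}$ in spherical coordinates, differentiate in $\theta$, divide by $\sin^{n-2}\theta$, and apply an analogous cap-swapping argument on $\mathbb{S}^{n-1}$ to show that $v(ry\sin\theta, r\cos\theta)$ is independent of $y \in \mathbb{S}^{n-2}$, which together with the sliced identity forces $v = v^{\#}$ pointwise on each sphere of positive radius.

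Finally, for a general $f \in L^2(B)$ I would approximate by test functions $f_k \in C_c^{\infty}(B)$ (of zero mean when $\alpha=0$), so that $f_k \to f$ and $f_k^{\#} \to f^{\#}$ in $L^2(B)$ by the $L^2$-contraction property of cap symmetrization. Letting $v_k$ solve the Robin problem with datum $f_k^{\#}$ (and normalized when $\alpha=0$), continuous dependence on the data (via \eqref{ineq:contdata}, and Corollary 1.29 of \cite{LangfordThesis} in the Neumann case) yields $v_k \to v$ in $L^2(B)$, while the smooth case gives $v_k = v_k^{\#}$ for each $k$. Another application of the $L^2$-contraction property produces $v_k^{\#} \to v^{\#}$ in $L^2(B)$, and passing to the limit gives $v = v^{\#}$ a.e.

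I do not expect a genuine obstacle, as the argument transfers essentially verbatim from the shell case; indeed the author flags this by saying ``carries over verbatim.'' The only mild point of care is the behavior at the origin, where the polar rectangle $B^{\bigstar}$ degenerates: but since $v \in C^2(\overline{B})$ forces $v(0) = v^{\#}(0)$ by the definition of $f^{\#}(0)$ recalled just before the proof of Theorem \ref{Th:ball}, the a.e. equality on spheres $\{|x|=r\}$ with $r>0$ is already enough to conclude $v = v^{\#}$ a.e.\ on $B$.
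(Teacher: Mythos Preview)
Your proposal is correct and follows essentially the same approach as the paper, which simply states that the proof of Corollary \ref{cor:vsymmshell} ``carries over verbatim'' to the ball setting. Your added remark about the origin is a reasonable point of care, but as you note it causes no difficulty.
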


Equation \eqref{eq:samemeansball}, Theorem \ref{Th:ball}, and Corollary \ref{cor:Lpnormsgen} similarly give the following analogue of Corollary \ref{cor:Lpnorms} for the ball setting.

\begin{corollary}\label{cor:Lpball}
Let $u$ and $v$ be as in Theorem \ref{Th:ball}. Then for almost every $r\in (0,b)$, we have
\[
\|u(r\ \! \cdot)\|_{L^{p}(\mathbb{S}^{n-1})}  \leq  \|v(r\ \!\cdot)\|_{L^{p}(\mathbb{S}^{n-1})},\quad1\leq p\leq +\infty,
\]
and
\begin{align*}
\underset{\mathbb{S}^{n-1}}{\esssup}\ u(r\ \!\cdot) & \leq  \underset{\mathbb{S}^{n-1}}{\esssup}\ v(r\ \!\cdot),\\
\underset{\mathbb{S}^{n-1}}{\essinf}\ u(r\ \!\cdot) & \geq  \underset{\mathbb{S}^{n-1}}{\essinf}\ v(r\ \!\cdot),\\
\underset{\mathbb{S}^{n-1}}{\osc}\ u(r\ \!\cdot) & \leq  \underset{\mathbb{S}^{n-1}}{\osc}\ v(r\ \!\cdot).
\end{align*}
Consequently,
\[
\|u\|_{L^{p}(B)}  \leq  \|v\|_{L^{p}(B)},\quad1\leq p\leq +\infty,
\]
and
\begin{align*}
\underset{B}{\esssup}\ u & \leq  \underset{B}{\esssup}\ v,\\
\underset{B}{\essinf}\ u & \geq  \underset{B}{\essinf}\ v,\\
\underset{B}{\osc}\ u & \leq  \underset{B}{\osc}\ v.
\end{align*}
\end{corollary}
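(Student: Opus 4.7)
The plan is to reduce everything to the general majorization result Corollary \ref{cor:Lpnormsgen} applied on the measure space $(\mathbb{S}^{n-1},\sigma)$ with the slice functions $u^r(\xi)=u(r\xi)$ and $v^r(\xi)=v(r\xi)$ at each fixed radius $r$. The two ingredients Corollary \ref{cor:Lpnormsgen} requires are a star-function inequality $(u^r)^\bigstar\leq (v^r)^\bigstar$ on $[0,\sigma(\mathbb{S}^{n-1})]$ and the equality of integrals $\int_{\mathbb{S}^{n-1}}u^r\,d\sigma=\int_{\mathbb{S}^{n-1}}v^r\,d\sigma$. Both have essentially already been established: Theorem \ref{Th:ball} supplies $u^{\bigstar}(r,\theta)\leq Jv(r,\theta)$ for a.e. $r\in(0,b)$ and all $\theta\in(0,\pi)$, and the definition $v^{\bigstar}(r,\theta)=\max_{\sigma(E)=\sigma(K(\theta))}\int_E v(r\xi)\,d\sigma(\xi)$ immediately gives $Jv(r,\theta)\leq v^{\bigstar}(r,\theta)$; chaining these yields $u^{\bigstar}(r,\theta)\leq v^{\bigstar}(r,\theta)$. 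Reparametrizing $\theta\mapsto \sigma(K(\theta))$ turns this into $(u^r)^\bigstar\leq (v^r)^\bigstar$ on $[0,\sigma(\mathbb{S}^{n-1})]$ in the sense of Definition \ref{Def: Star Function on General Measure Space}. Equation \eqref{eq:samemeansball} from the proof of Theorem \ref{Th:ball} supplies the equality of means for a.e. $r$.

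With both hypotheses in hand, Corollary \ref{cor:Lpnormsgen} applied on $(\mathbb{S}^{n-1},\sigma)$ at each admissible $r$ yields exactly the four pointwise-in-$r$ slice inequalities claimed: the $L^p$ norms ($1\leq p\leq\infty$), $\esssup$, $\essinf$, and $\osc$ comparisons on $\{|x|=r\}$. This handles the first half of the corollary.

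For the global estimates on $B$, I would integrate in spherical coordinates. For $1\leq p<\infty$,
\[
\|u\|_{L^p(B)}^p=\int_0^b\!\int_{\mathbb{S}^{n-1}}|u(r\xi)|^p\,d\sigma(\xi)\,r^{n-1}dr\leq \int_0^b\!\int_{\mathbb{S}^{n-1}}|v(r\xi)|^p\,d\sigma(\xi)\,r^{n-1}dr=\|v\|_{L^p(B)}^p,
\]
using the slice inequality at each $r$. For $p=\infty$ one takes the essential supremum over $r$ of the slice $L^\infty$-norms, and analogously for $\esssup$, $\essinf$, and $\osc$; in each case the global quantity coincides with the essential supremum (or infimum) over $r\in(0,b)$ of the corresponding slice quantity, so the slice-by-slice comparison propagates.

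There is no real obstacle here: the corollary is a packaging of Theorem \ref{Th:ball}, the trivial bound $Jv\leq v^{\bigstar}$, the mean-equality \eqref{eq:samemeansball}, and the abstract majorization result Corollary \ref{cor:Lpnormsgen}. The only point requiring mild care is that \eqref{eq:samemeansball} and the slice inequality $(u^r)^\bigstar\leq (v^r)^\bigstar$ each hold only for a.e. $r$, so one works on the intersection of two full-measure sets of radii before integrating, which does not affect the values of the integrals or essential suprema over $B$.
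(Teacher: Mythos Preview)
Your proposal is correct and follows essentially the same approach as the paper: the paper's proof is a one-line invocation of equation \eqref{eq:samemeansball}, Theorem \ref{Th:ball}, and Corollary \ref{cor:Lpnormsgen}, and you have simply unpacked these ingredients (including the trivial bound $Jv\leq v^{\bigstar}$ and the radial integration for the global statements) in the natural way.
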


We next prove our paper's third main result.

\begin{proof}[Proof of Theorem \ref{th:cyl}]
Again, the proof is rather long, so we break it down into manageable steps.

\textbf{Step 1: Maximum Principle.} If $\alpha>0$, let $Q_1$ denote the solution to the Robin problem
\[
\begin{array}{rclcc}
\Delta Q_1 & = & 0 & \text{in} & D,\\
\frac{\partial Q_1}{\partial \nu} +\alpha Q_1 & = & 1 & \text{on} & \partial D.
\end{array}
\]
If $\alpha=0$, let $Q_1$ be the solution to the Neumann problem
\[
\begin{array}{rclcc}
\Delta Q_1 & = & \frac{\mathcal H^{n-2}(\partial D)}{\mathcal L^{n-1}(D)} & \text{in} & D,\\
\frac{\partial Q_1}{\partial \nu} & = & 1 & \text{on} & \partial D,
\end{array}
\]
where $Q_1$ is additively normalized to have zero  mean. Here we have written $\mathcal H^{n-2}$ for $(n-2)$-dimensional Hausdorff measure and $\mathcal L^{n-1}$ for $(n-1)$-dimensional Lebesgue measure. By Theorem 6.31 in \cite{GilbargTrudinger}, Theorem 3.2 in \cite{Olga}, and \cite{Nardi}, $Q_1$ belongs to $C^2(\overline{D})$. We are led to define a function $Q$ by
\[
Q(x,y)=Q_1(x)+Cy(\ell-y),
\]
for $(x,y)\in \Omega$, where $C>0$ is chosen large enough to guarantee
\[
\Delta Q=\Delta_x Q_1-2C\leq 0
\]
on $\Omega$. Define for $\varepsilon>0$,
\[
w(x,y)=u^{\bigstar}(x,y)-Jv(x,y)-\varepsilon Q(x,y)
\]
for $(x,y)\in \Omega$.

In the weak sense, we have
\begin{align*}
-\Delta w &= -\Delta u^{\bigstar} +\Delta Jv + \varepsilon \Delta Q\\
&\leq f^{\bigstar} + J\Delta v + \varepsilon \Delta Q\\
&=f^{\bigstar} - f^{\bigstar} + \varepsilon \Delta Q\\
&\leq 0,
\end{align*}
where the first inequality follows from Proposition \ref{Prop:CommutativityCyl} and Theorem \ref{Thm:SubharmonicityCyl}, the second equality follows from \eqref{eq:ustarcyl}, and the second inequality follows from our choice of $C$.
Thus $w$ is weakly subharmonic in $\Omega$ so by the Maximum Principle (Theorem 1 in \cite{Littman})
\begin{equation}\label{ineq:MPcyl}
\sup_{\Omega}w\leq \max_{\partial \Omega}w.
\end{equation}
As before, we assume $\displaystyle \max_{\partial \Omega}w>0$, since otherwise, we are done.

\textbf{Step 2: Boundary Analysis on $\partial \Omega_2$.} We first show that $\displaystyle \max_{\partial \Omega}w$ is not achieved at a point of $\partial \Omega_2$. Fix $(x,y)\in \partial \Omega_2$, so that $x\in \partial D$ and $y\in [0,\ell]$. Let $n=n(x)$ denote the inner normal vector at $x$ to $\partial D$. Let $E(x)$ denote a measurable subset of $(0,\ell)$ of length $y$ for which the $\sup$ defining $u^{\bigstar}(x,y)$ is achieved. By translating, we assume $0\in D$. Write $r=|x|$. We then have for $h>0$,
\begin{align*}
&\frac{e^{-\alpha(r+h)}w(x+hn,y)-e^{-\alpha r}w(x,y)}{h} \qquad \qquad \qquad \qquad \qquad \qquad \qquad \qquad \qquad   \\
&\qquad \qquad \qquad \qquad \qquad \qquad= e^{-\alpha(r+h)}\int_{E(x+hn)}\frac{u(x+hn,t)}{h}\,d t - e^{-\alpha r}\int_{E(x)}\frac{u(x,t)}{h}\,d t \\
&\qquad \qquad \qquad \qquad \qquad \qquad\qquad -e^{-\alpha(r+h)}\int_0^y\frac{v(x+hn,t)}{h}\,d t +e^{-\alpha r}\int_0^y\frac{v(x,t)}{h}\,d t\\
&\qquad \qquad \qquad \qquad \qquad \qquad\qquad -\varepsilon \frac{e^{-\alpha(r+h)}Q(x+hn,y)-e^{-\alpha r}Q(x,y)}{h}\\
&\qquad \qquad \qquad \qquad \qquad \qquad \geq \int_{E(x)}\frac{e^{-\alpha(r+h)}u(x+hn,t)-e^{-\alpha r}u(x,t)}{h}\,d t\\
&\qquad \qquad \qquad \qquad \qquad \qquad\qquad -\int_0^y\frac{e^{-\alpha(r+h)}v(x+hn,t)-e^{-\alpha r}v(x,t)}{h}\,d t\\
&\qquad \qquad \qquad \qquad \qquad \qquad\qquad -\varepsilon \frac{e^{-\alpha(r+h)}Q(x+hn,y)-e^{-\alpha r}Q(x,y)}{h}.
\end{align*}
Using the Dominated Convergence Theorem, we therefore have
\begin{align*}
\underset{h\rightarrow 0^+}{\textup{lim inf}}\frac{e^{-\alpha(r+h)}w(x+hn,y)-e^{-\alpha r}w(x,y)}{h}& \geq -e^{-\alpha r}\int_{E(x)}\left(\frac{\partial u}{\partial \nu}(x,t)+\alpha u(x,t)\right)\,d t \\
&\qquad +e^{-\alpha r}\int_0^y\left(\frac{\partial v}{\partial \nu}(x,t)+\alpha v(x,t)\right)\,d t \\
&\qquad +\varepsilon e^{-\alpha r}\left(1+\alpha Cy(\ell-y)\right)\\
& = \varepsilon e^{-\alpha r}\left(1+\alpha Cy(\ell-y)\right),
\end{align*}
where the last equality holds courtesy of the Robin boundary conditions on $\partial \Omega_2$. If $\displaystyle \max_{\partial \Omega}w$ is achieved at $(x,y)$, then for all $h>0$ sufficiently small we have
\begin{equation}\label{ineq:wcylsides}
w(x+hn,y)>e^{\alpha h}w(x,y)+\frac{e^{\alpha h}h\varepsilon}{2}>w(x,y),
\end{equation}
as $w(x,y)>0$ and $\alpha \geq 0$. When $y\in (0,\ell)$, inequality \eqref{ineq:wcylsides} says that $\displaystyle \max_{\partial \Omega}w$ is not achieved at a point $(x,y)\in \partial \Omega_2$, since otherwise the Maximum Principle \eqref{ineq:MPcyl} would be violated. Similarly, when $y=0$ or $y=\ell$, $\displaystyle \max_{\partial \Omega}w$ is not achieved at a point $(x,y)\in \partial \Omega_2$ since by \eqref{ineq:wcylsides}, $w$ assumes strictly larger values at points of $\partial \Omega_1$.

\textbf{Step 3: Boundary Analysis on $\partial \Omega_1$.} We start by defining 
\[
\psi(x)=\int_0^{\ell}(u(x,t)-v(x,t))\,d t,\qquad x\in D.
\]
Using the Dominated Convergence Theorem and our assumption that $u,v\in C^2(\overline \Omega)$, we see that $\psi$ is harmonic in $D$:
\begin{align*}
\Delta \psi(x)&=\int_0^{\ell}(\Delta_x u(x,t)-\Delta_x v(x,t))\,d t\\
&=\int_0^{\ell}(-u_{yy}(x,t)-f(x,t)+v_{yy}(x,t)+f^{\#}(x,t))\,d t\\
&= \int_0^{\ell}(-f(x,t)+f^{\#}(x,t))\,d t-u_y(x,\ell)+u_y(x,0)+v_y(x,\ell)-v_y(x,0)\\
&=0.
\end{align*}
This last equality holds by the Neumann boundary conditions of $u,v$ along $\partial \Omega_1$ and since $f$ and $f^{\#}$ are rearrangements of each other. 

Taking $x\in \partial D$, we next investigate the Robin boundary condition of $\psi$:
\[
\frac{\partial \psi}{\partial \nu}(x)+\alpha\psi(x)=\int_0^{\ell}\left(\left(\frac{\partial u}{\partial \nu}(x,t)+\alpha u(x,t)\right)-\left(\frac{\partial v}{\partial \nu}(x,t)+\alpha v(x,t)\right)\right)\,d t=0,
\]
since $u$ and $v$ themselves satisfy Robin boundary conditions on $\partial \Omega_2$. If $\alpha=0$, note that $\psi$ has zero mean by our normalization assumption on $u,v$:
\[
\int_D\psi(x)\,d x=\int_{\Omega}(u-v)\,d x\,dy=0.
\]
The function $\psi$ therefore solves the problem
\[
\begin{array}{rclcc}
\Delta \psi & = & 0 & \text{in} & D,\\
\frac{\partial \psi}{\partial \nu} +\alpha \psi & = & 0 & \text{on} & \partial D,
\end{array}
\]
and when $\alpha=0$, $\int_{D}\psi\,dx=0$. Integrating by parts, $\psi$ satisfies the equation
\[
\int_{D}|\nabla \psi|^2\,dx=-\alpha\int_{\partial D}\psi^2\,dS,
\]
so we deduce $\psi \equiv 0$ in $D$. Since $u^{\bigstar}-Jv=0$ when $y=0$, we conclude that
\[
w(x,0)=w(x,\ell)=-\varepsilon Q_1(x)
\]
for $x\in D$.

Our work from Steps 2 and 3 and the Maximum Principle \eqref{ineq:MPcyl} together give
\[
\sup_{\Omega} (u^{\bigstar}-Jv)\leq \varepsilon \|Q\|_{L^{\infty}(\Omega)}+\sup_{D}(-\varepsilon Q_1).
\]
Sending $\varepsilon \to 0^+$ gives $u^{\bigstar}-Jv\leq 0$ in $\Omega$. Note that since $\psi(x)\equiv 0$ on $D$, for each $x\in D$, the slice functions $u(x,\cdot)$ and $v(x,\cdot)$ have the same mean on $(0,\ell)$. Thus Proposition \ref{Prop:Majorization} gives the theorem's claims on the convex means of $u$ and $v$.
\end{proof}

As before, we easily deduce that the solution of the symmetrized problem is itself symmetrized.

\begin{corollary}
Let $v$ be as in Theorem \ref{th:cyl}. Then $v=v^{\#}$ on $\Omega$.
\end{corollary}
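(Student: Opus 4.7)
The plan is to follow the exact strategy used for Corollary \ref{cor:vsymmshell}. The first step is to apply Theorem \ref{th:cyl} with $f^{\#}$ in place of $f$ as the input data. Since the decreasing rearrangement in the $y$-direction is idempotent, $(f^{\#})^{\#}=f^{\#}$, and the two Poisson problems appearing in Theorem \ref{th:cyl} coincide and admit the same unique solution, namely $v$ (with the prescribed zero-mean normalization in the case $\alpha=0$). The regularity hypotheses of Theorem \ref{th:cyl} are met for this substitution: $f^{\#}$ is continuous on $\overline{\Omega}$ by assumption and $v\in C^2(\overline{\Omega})$. The theorem's conclusion then reduces to $v^{\bigstar}(x,y)\leq Jv(x,y)$ for each $x\in D$ and each $y\in[0,\ell]$.

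The reverse inequality $Jv(x,y)\leq v^{\bigstar}(x,y)$ is immediate from the definition of the star function, since $(0,y)$ is a measurable subset of $(0,\ell)$ of length $y$. Combining the two bounds yields
\[
\int_0^y v^{\#}(x,t)\,dt=\int_0^y v(x,t)\,dt
\]
for all $x\in D$ and $y\in[0,\ell]$. Differentiating in $y$ gives $v^{\#}(x,y)=v(x,y)$ for almost every $y$. The slice $y\mapsto v(x,y)$ is continuous because $v\in C^2(\overline{\Omega})$, and it agrees almost everywhere with the decreasing function $y\mapsto v^{\#}(x,y)$. Therefore $v(x,\cdot)$ is itself monotone decreasing on $[0,\ell]$, hence coincides with its own decreasing rearrangement, and so $v(x,y)=v^{\#}(x,y)$ for every $(x,y)\in \Omega$.

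There is essentially no serious obstacle in this argument; the only point demanding even mild care is verifying that Theorem \ref{th:cyl} legitimately applies with $f^{\#}$ as input, which is immediate from the standing regularity hypotheses on $f^{\#}$ and $v$. In this sense the corollary is a direct consequence of the main theorem, obtained by exploiting that the $J$-transform and the star function sandwich each other precisely when the data is already symmetrized.
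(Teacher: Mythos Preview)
Your proof is correct and follows essentially the same route as the paper: apply Theorem \ref{th:cyl} with $f^{\#}$ in place of $f$ to obtain $v^{\bigstar}\leq Jv$, combine with the trivial reverse inequality, and differentiate the resulting integral identity in $y$. Your added remark using continuity of $v(x,\cdot)$ to upgrade the a.e.\ equality $v=v^{\#}$ to pointwise equality on all of $\Omega$ is a small refinement over the paper's terser statement.
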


\begin{proof}
Letting $f=f^{\#}$, Theorem \ref{th:cyl} gives that $v^{\bigstar}\leq Jv$ on $\Omega$. Since the reverse inequality holds by definition, we have $v^{\bigstar}= Jv$ on $\Omega$. Spelled out, this means that for each $x\in D$,
\[
\int_0^yv^{\#}(x,t)\,dt=\int_0^yv(x,t)\,dt
\]
for each $y\in (0,\ell)$. Differentiating the above equation with respect to $y$ shows that $v(x,y)=v^{\#}(x,y)$ on $\Omega$, as desired.
\end{proof}

Finally, we have an analogue of Corollaries \ref{cor:Lpnorms} and \ref{cor:Lpball} for the cylindrical setting. 

\begin{corollary}\label{cor:Lpcyl}
Let $u$ and $v$ be as in Theorem \ref{th:cyl}. Then for each $x\in D$, the slice functions $u(x,\cdot)$ and $v(x,\cdot)$ satisfy
\[
\|u(x,\cdot)\|_{L^{p}(0,\ell)}  \leq  \|v(x,\cdot)\|_{L^{p}(0,\ell)},\quad1\leq p\leq +\infty.
\]
Moreover, for each $x\in D$, 
\begin{align*}
\underset{(0,\ell)}{\sup}\ u(x,\cdot) & \leq  \underset{(0,\ell)}{\sup}\ v(x,\cdot),\\
\underset{(0,\ell)}{\inf}\ u(x,\cdot) & \geq  \underset{(0,\ell)}{\inf}\ v(x,\cdot),\\
\underset{(0,\ell)}{\osc}\ u(x,\cdot) & \leq  \underset{(0,\ell)}{\osc}\ v(x,\cdot).
\end{align*}
Consequently,
\[
\|u\|_{L^{p}(\Omega)}  \leq  \|v\|_{L^{p}(\Omega)},\quad1\leq p\leq +\infty,
\]
and
\begin{align*}
\underset{\Omega}{\sup}\ u & \leq  \underset{\Omega}{\sup}\ v,\\
\underset{\Omega}{\inf}\ u & \geq  \underset{\Omega}{\inf}\ v,\\
\underset{\Omega}{\osc}\ u & \leq  \underset{\Omega}{\osc}\ v.
\end{align*}
\end{corollary}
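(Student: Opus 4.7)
The plan is to deduce this corollary from Theorem \ref{th:cyl} together with Corollary \ref{cor:Lpnormsgen} applied slicewise. Theorem \ref{th:cyl} already furnishes the pointwise star-function inequality $u^{\bigstar}(x,y)\leq Jv(x,y)$ for each $x\in D$ and $y\in[0,\ell]$. Moreover, inspecting the proof of Theorem \ref{th:cyl} reveals the auxiliary fact that the harmonic function $\psi(x)=\int_0^{\ell}(u(x,t)-v(x,t))\,dt$ vanishes identically on $D$, so for every $x\in D$ the one-dimensional slice functions $u(x,\cdot),v(x,\cdot)\in L^1(0,\ell)$ have the same mean on $(0,\ell)$. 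This is precisely the hypothesis needed to invoke the \emph{equal-integrals} branch of Corollary \ref{cor:Lpnormsgen}.

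With these inputs in hand, the first step is to view $((0,\ell),\mathcal L^1)$ as a finite measure space, fix $x\in D$, and regard $u(x,\cdot)$ and $v(x,\cdot)$ as integrable functions on it. Since $Jv(x,y)\leq v^{\bigstar}(x,y)$ by definition of the star function, Theorem \ref{th:cyl} gives $u(x,\cdot)^{\bigstar}\leq v(x,\cdot)^{\bigstar}$ on $[0,\ell]$. Combined with the equal-means property, Corollary \ref{cor:Lpnormsgen} then yields, for each fixed $x\in D$, both the slicewise $L^p$ inequality $\|u(x,\cdot)\|_{L^p(0,\ell)}\leq \|v(x,\cdot)\|_{L^p(0,\ell)}$ for $1\leq p\leq\infty$, and the slicewise $\esssup$, $\essinf$, and $\osc$ comparisons; because $u,v\in C^2(\overline{\Omega})$, these essential bounds coincide with ordinary $\sup$ and $\inf$ on $(0,\ell)$.

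The second step is to promote the slicewise statements to statements on $\Omega$. For finite $p$, Fubini gives
\[
\|u\|_{L^p(\Omega)}^p=\int_D\|u(x,\cdot)\|_{L^p(0,\ell)}^p\,dx\leq \int_D\|v(x,\cdot)\|_{L^p(0,\ell)}^p\,dx=\|v\|_{L^p(\Omega)}^p,
\]
and for $p=\infty$ one takes the essential supremum over $x\in D$ of the slicewise inequality. The $\sup$, $\inf$, and $\osc$ comparisons on $\Omega$ follow at once by taking the supremum (respectively infimum, supremum) over $x\in D$ of the corresponding slicewise statements, using that $\sup_{\Omega}u=\sup_{x\in D}\sup_{y\in(0,\ell)}u(x,y)$ and similarly for $v$.

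There is no substantive obstacle here: once Theorem \ref{th:cyl} is established and the equal-means identity $\psi\equiv 0$ has been noted, the corollary is a direct application of Corollary \ref{cor:Lpnormsgen} followed by a Fubini/supremum pass. The only mildly delicate point is remembering that the strengthened conclusions of Corollary \ref{cor:Lpnormsgen} (the $\essinf$ and $\osc$ bounds) require the equal-integrals hypothesis, which is exactly why extracting $\psi\equiv 0$ from the proof of Theorem \ref{th:cyl} is essential rather than merely convenient.
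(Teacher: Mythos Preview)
Your proposal is correct and follows essentially the same approach as the paper: extract the equal-means identity $\psi\equiv 0$ from the proof of Theorem \ref{th:cyl}, combine it with $u^{\bigstar}\leq Jv\leq v^{\bigstar}$ to invoke Corollary \ref{cor:Lpnormsgen} slicewise, and then pass to $\Omega$ via Fubini and suprema. The paper's proof is terser but identical in substance.
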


\begin{proof}
The proof of Theorem \ref{th:cyl} shows that for each $x\in D$,
\[
\int_0^{\ell}u(x,t)\,dt=\int_0^{\ell}v(x,t)\,dt.
\]
This equality together with Theorem $\ref{th:cyl}$ and Corollary \ref{cor:Lpnormsgen} give the claimed inequalities on the slice functions of $u$ and $v$, and as before, the subsequent inequalities follow.
\end{proof}

\subsection*{Concluding Remarks} In surveying the results of the present paper, and also those of \cite{ACNT} and \cite{ANT}, the reader may be left with a lingering question: Are there comparison results with negative Robin parameters? The answer to this question is unclear. The techniques employed here clearly hinge on the Robin parameters being nonnegative.

In the transition from the positive to the negative regime, interesting things can happen. One particularly interesting example comes from spectral theory. Consider, for example, the the eigenvalue problem for the Robin Laplacian. Precisely, given a Lipschitz domain $\Omega \subseteq \mathbb{R}^n$, the eigenvalues of the Robin Laplacian
\begin{eqnarray*}
-\Delta u_k & = & \lambda_k(\Omega; \alpha) u_k \quad\text{in}\quad\Omega,\label{eq:RPpde}\\
\frac{\partial u}{\partial \nu} +\alpha u& = & 0\quad\text{on}\quad\partial\Omega, \label{eq:RPbc}
\end{eqnarray*}
are known to satisfy
\[
\lambda_1(\Omega; \alpha)<\lambda_2(\Omega; \alpha)\leq \lambda_3(\Omega; \alpha) \leq \cdots \to +\infty.
\]
For $\alpha>0$, the work of Bossel \cite{Bossel} and Daners \cite{Daners} gives an isoperimetric inequality for the lowest Robin eigenvalue:
\begin{equation}\label{eq:FKRobin}
\lambda_1(\Omega; \alpha) \geq \lambda_1(\Omega^{\#}; \alpha),
\end{equation}
where $\Omega^{\#}$ is the centered open ball in $\mathbb{R}^n$ with $|\Omega^{\#}|=|\Omega|$. Naturally, Bareket \cite{Bareket} conjectured that \eqref{eq:FKRobin} still holds for $\alpha<0$. But P. Freitas and D. Krej\v{c}i\v{r}\'{\i}k \cite{FK} showed that Bareket's conjecture is false in general for negative $\alpha$, though they do show that in dimension $n=2$, \eqref{eq:FKRobin} holds when the parameter $\alpha$ is sufficiently close to zero. Taken in sum, this work suggests that establishing pde comparison results with negative Robin parameters might be difficult and subtle. The author plans to investigate this question, but in a separate, future project.


\begin{thebibliography}{BBMP1}

\bibitem{ACNT}
A. Alvino, F. Chiacchio, C. Nitsch, and C. Trombetti, \emph{Sharp estimates for solutions to elliptic problems with mixed boundary conditions}, to appear in Journal de Math\'ematiques Pures et Appliqu\'ees. Retrievable from https://arxiv.org/abs/2009.04320.

\bibitem{AlvinoDiaz Lions and Trombetti}A. Alvino, J. I. Diaz,
P. L. Lions, and G. Trombetti, \emph{Elliptic equations and Steiner
symmetrization,} Comm. Pure Appl. Math. 49 (1996), no. 3, 217-236.

\bibitem{ANT}
A. Alvino, C. Nitsch, and C. Trombetti, \emph{A Talenti comparison result for solutions to elliptic problems with Robin boundary conditions}, preprint. Retrievable from https://arxiv.org/abs/1909.11950.

\bibitem{AFK}
P. R. S. Antunes, P. Freitas and D. Krej\v{c}i\v{r}\'{\i}k, \emph{Bounds and extremal domains for Robin eigenvalues with negative boundary parameter}, Adv. Calc. Var. 10 (2017), 357--379.

\bibitem{Bareket}
M. Bareket, \emph{On an isoperimetric inequality for the first eigenvalue of a boundary value problem},
SIAM J. Math. Anal. 8 (1977), 280--287.

\bibitem{Baernstein Edrei's Spread Conjecture}A. Baernstein
II, \emph{Proof of Edrei's spread conjecture}, Proc. London Math.
Soc. (3) 26 (1973), 418--434.

\bibitem{Baernstein Integral means}A. Baernstein II, \emph{Integral
means, univalent functions and circular symmetrization}, Acta Math.
133 (1974), 139--169.

\bibitem{Baernstein how the star function}A. Baernstein II,
\emph{How the \textasteriskcentered{}-function solves extremal problems},
Proceedings of the International Congress of Mathematicians (Helsinki,
1978), Acad. Sci. Fennica, Helsinki, 1980, 639--644.

\bibitem{Baernstein Cortona Volume}A. Baernstein II, \emph{A
unified approach to symmetrization}, Partial differential equations
of elliptic type (Cortona, 1992), 47\textendash{}91, Sympos. Math.,
XXXV, Cambridge Univ. Press, Cambridge, 1994.

\bibitem{Barenstein Star Function in Complex Analysis}A. Baernstein
II, \emph{The} \emph{\textasteriskcentered{}-function in complex analysis},
Handbook of Complex Analysis, Vol. I: Geometric Function Theory, edited
by R. Koehnau, Elsevier Science, 2002, 229--271.

\bibitem{Baernstein Symmetrization in Analysis} A. Baernstein II, ``Symmetrization in analysis.'' With David Drasin and Richard S. Laugesen. New Mathematical Monographs, 36. Cambridge University Press, Cambridge, 2019. xviii+473 pp.

\bibitem{Orig Schauder Estimates} L. Bers, L. Nirenberg, \emph{On linear and non-linear elliptic boundary value problems in the plane}, Convegno Internazionale sulle Equazioni Lineari alle Derivate Parziali, Trieste, 1954, pp. 141--167. Edizioni Cremonese, Roma, 1955.

\bibitem{Bossel}
M.-H. Bossel, \emph{Membranes \'elastiquement li\'ees: Extension du th\'eor\'eme de Rayleigh-Faber-Krahn
et de l'in\'egalit\'e de Cheeger}, C. R. Acad. Sci. Paris S\'er. I Math. 302 (1986), 47--50.

\bibitem{Periodic Steiner}F. Brock, \emph{Steiner symmetrization
and periodic solutions of boundary value problems}, Z. Anal. Anwendungen
13 (1994), no. 3, 417\textendash{}423.

\bibitem{BFNT}
D. Bucur, V. Ferone, C. Nitsch, and C. Trombetti, \emph{A sharp estimate for the first Robin-Laplacian
eigenvalue with negative boundary parameter}, preprint. Retrievable from https://arxiv.org/abs/1810.06108.

\bibitem{BucurGiacomini1}
D. Bucur and A. Giacomini, \emph{A variational approach to the isoperimetric inequality for the Robin
eigenvalue problem}, Arch. Ration. Mech. Anal. 198 (2010), 927--961.

\bibitem{BucurGiacomini2}
D. Bucur and A. Giacomini, \emph{Faber-Krahn inequalities for the Robin-Laplacian: a free discontinuity approach}, Arch. Ration. Mech. Anal. 218 (2015), 757--824.

\bibitem{ChasmanLangford}
L. M. Chasman and J. J. Langford, \emph{A sharp isoperimetric inequality for the second eigenvalue of the Robin plate}, preprint. Retrievable from https://arxiv.org/abs/2010.10576.

\bibitem{Daners}
D. Daners, \emph{A Faber-Krahn inequality for Robin problems in any space dimension}, Math. Ann.
335 (2006), 767--785.

\bibitem{EvansGariepy}
L. C. Evans and R. F. Gariepy, Measure theory and fine properties of functions. Studies in Advanced Mathematics. CRC Press, Boca Raton, FL, 1992. viii+268 pp.

\bibitem{FK}
P. Freitas and D. Krej\v{c}i\v{r}\'{\i}k, \emph{The first Robin eigenvalue with negative boundary parameter}, Adv.
Math. 280 (2015), 322--339.

\bibitem{FreitasLaugesen1}
P. Freitas and R. S. Laugesen,  \emph{From Neumann to Steklov and beyond, via Robin: the Weinberger way}, American Journal of Mathematics, to appear. Retrievable from https://arxiv.org/abs/1810.07461.

\bibitem{FrietasLaugesen2}
P. Freitas and R. S. Laugesen, \emph{From Steklov to Neumann and beyond, via Robin: the Szeg\H{o} way}, Canadian Journal of Mathematics, 72(4), 1024--1043.

\bibitem{GilbargTrudinger}
D. Gilbarg and N. S. Trudinger, ``Elliptic partial differential equations of second order.'' Reprint of the 1998 edition. Classics in Mathematics. Springer-Verlag, Berlin, 2001.

\bibitem{GirourdLaugesen}
A. Girouard and R. S. Laugesen. \emph{Robin spectrum: two disks maximize the third eigenvalue}, Indiana University Mathematics Journal, to appear.

\bibitem{Kawohl1}B. Kawohl, ``Rearrangements and Convexity of
Level Sets in PDE,'' Lecture Notes in Math. 1150, Springer, Berlin,
1985.

\bibitem{Olga}O. A. Ladyzenskaja and N. N. Ural'ceva, ``Linear
and Quasilinear Elliptic Equations'', Academic Press, New York-London,
1968.

\bibitem{LangfordThesis} J. J. Langford, ``Comparison Theorems in Elliptic Partial Differential Equations with Neumann Boundary Conditions,'' Thesis (Ph.D.)--Washington University in St. Louis. 2012.

\bibitem{Langford1} J. J. Langford, \emph{Symmetrization of Poisson's equation with Neumann boundary conditions}, Ann. Sc. Norm. Super. Pisa Cl. Sci. (5) 14 (2015), no. 4, 1025--1063.

\bibitem{Langford2}
J. J. Langford, \emph{Neumann comparison theorems in elliptic PDEs}, Potential Anal. 43 (2015), no. 3, 415--459.

\bibitem{Langford3}
J. J. Langford, \emph{Subharmonicity, comparison results, and temperature gaps in cylindrical domains}, Differential Integral Equations 29 (2016), no. 5-6, 493--512.

\bibitem{Leib and Loss Analysis}E. H. Lieb and M. Loss, ``Analysis'',
Second edition. Graduate Studies in Mathematics, 14. American Mathematical
Society, Providence, RI, 2001.

\bibitem{Littman}W. Littman, \emph{A strong maximum principle
for weakly L-subharmonic functions}, J. Math. Mech. 8 1959 761--770.

\bibitem{Nardi} G. Nardi, \emph{Schauder
estimate for solutions of Poisson's equation with Neumann boundary
conditions}, Enseign. Math. 60 (2014), no. 3-4, 421--435.

\bibitem{Sarvas}
J. Sarvas, \emph{Symmetrization of condensers in n-space}, Ann. Acad. Sci. Fenn. Ser. A. I. 1972, no. 522, 44 pp.

\bibitem{Talenti}G. Talenti, \emph{Elliptic equations and rearrangements,}
Ann. Scuola Norm. Sup. Pisa Cl. Sci (4) 3 (1976), 697--718.

\bibitem{TalentiSurvey}
G. Talenti, \emph{The art of rearranging}, Milan J. Math. 84 (2016), no. 1, 105--157.

\bibitem{Weitsman}A. Weitsman, \emph{Spherical symmetrization
in the theory of elliptic partial differential equations}, Comm. Partial
Differential Equations 8 (1983), no. 5, 545--561.

\end{thebibliography}
\end{document}